\newcommand{\ignore}[1]{}
\newtheorem{lemma}{Lemma}
\newtheorem{theorem}{Theorem}
\newcommand{\Probs}{\mathbb P}
\newcommand{\Reals}{{\mathbb R}}
\newcommand{\tgi}{\lim_{t\rightarrow \infty}}
\newcommand{\e}{{\rm e}}
\newcommand{\pr}{{\mathcal P}}
\newcommand{\mI}{{\texttt{i}}}
\newcommand{\off}{{o\!f\!\!f}}
\newcommand{\Nm}{\mathbb{N}_m^+}
\newcommand{\Nn}{\mathbb{N}_n^+}
\newcommand{\one}{{\bf 1}}
\newcommand{\zero}{{\bf 0}}
\newcommand{\dg}{{\rm diag}}
\begin{document}
\title{\LARGE \bf
 Safe Markov Chains for Density Control of ON/OFF Agents with Observed Transitions}
\author{Nazl\i \  Demirer, Mahmoud El Chamie, and Beh\c cet\ A\c c\i kme\c se
\thanks{Authors are with the Department of Aeronautics and Astronautics, University of Washington, Seattle, WA  98195. Emails: \texttt{\{ndemirer, melchami, behcet\}@uw.edu }}}

%===================
\thispagestyle{empty}
\maketitle

%%%%%%%%%%%%%%%%%%%%%%%%%%%%%%%%%%%%%%%%%%%%%%%%%%%%%%%%%%%%%%%%%%%%%%%%%%%%%%%%
\begin{abstract}
This paper presents a convex optimization approach to control the density distribution of autonomous mobile agents with two control modes: ON and OFF. 
The main new characteristic distinguishing  this model from standard Markov decision models is the existence of the ON control mode and its observed actions.  When an agent is in the ON mode, it can measure the instantaneous  outcome of one of the actions corresponding to the ON mode and decides whether it should take this action or not  based on this new observation. If it does {\em not} take this action, it switches  to the OFF mode where it transitions to the next state  based on a predetermined set of transitional probabilities, without making any additional observations.
In this decision-making model, each agent acts autonomously according to an ON/OFF decision policy, and the discrete probability distribution for the agent's state  evolves according to a discrete-time Markov chain that is  a linear function of the stochastic environment (i.e., transition probabilities) and the ON/OFF decision policy. The relevant policy synthesis  is formulated as a convex optimization problem where  hard  safety  and convergence constraints are imposed  on the resulting Markov matrix. 
 We first consider the case where the  ON mode has a single action and the OFF mode has deterministic transitions (rather than stochastic) to demonstrate the model and the ideas behind our approach, which is then generalized to the case where ON mode has multiple actions and OFF mode has stochastic transitions. 
\end{abstract}

\section{Introduction} \label{sec:intro}
This paper presents a convex optimization based  approach for the synthesis of  randomized decision policies to control the density   of mobile agents that switch between two modes: ON and OFF. Each mode consists of a (possibly overlapping) finite set of actions, that is, there exist a set of actions for the ON mode and another set for the OFF mode, which may have a non-empty intersection. At each time step, an agent is allowed to measure (observe) the instantaneous outcome, i.e. {\em transition}, for a single action it chooses among the set of actions for the ON mode (the actions that can be taken while in ON mode). Then, it decides whether to accept or reject this transition. 
Both decisions, i.e., selection of an action to observe and acceptance/rejection of corresponding transition are made based on a randomized decision policy. If the agent does not accept the proposed transition, it switches to the OFF mode where it transitions to the next state  based on a predetermined set of transitional probabilities, without making any additional observations. The probability  distribution  of each agent's state evolves according to the resulting, underlying,  finite-state and discrete-time Markov chain (MC). In the example  of large number of multiple, {\em swarm of}, autonomous agents,  the overall density distribution of the swarm can be controlled by designing the underlying MC via the ON/OFF decision policy.

%\todo[inline]{PLEASE MAKE SURE THAT WE REFER TO ON/OFF as MODES everywhere not as ACTIONS. These modes contain the actions. Please read through and make sure this is consistent!}
 The control of systems with autonomous mobile agents has been a point of interest recently in many applications involving multi-agent sensor networks \cite{hill2000system}  such as micro vehicles for surveillance, search and rescue, mobile space-based  \cite{saaj2006spacecraft,ajc_13}, aerial  \cite{corke2004autonomous,wolf2010probabilistic}, naval \cite{heidemann2006research}, or land-based sensor networks \cite{Avrachenkov:2011}. The proposed Markov decision model and formulation  is applicable  for systems with both single and multiple agents, i.e., the density distribution  can be interpreted as the  temporal probability distribution of the state of a single agent, or  the state probability distribution over multiple agents. Among many possible applications of the theoretical framework presented, we  illustrate the results with a swarm control  example. Our previous research has developed methods for swarm control policy synthesis without the notion of agent modes or actions \cite{pga_acc12,ajc_13,auto2015}, then the idea is extended to control of a swarm of partially controlled ON/OFF agents in \cite{acc2015_onoff}, where we only considered the case when there is only a single action for the ON mode and a deterministic action for the OFF mode.  In this paper, we generalize this model for the case when there is an ON mode with  multiple  actions and an OFF mode with stochastic transitions, through a new Markov decision model with additional measurements for state transitions. 
%The following example can be used to illustrate the concept of ON/OFF agents: Consider a  sailboat operating in a given  wind and wave fields where being OFF means that the agent does not have its sail deployed, and being ON means that the sail is deployed (see Figure~\ref{sailboat}). Here the agent has statistical data on the sea current and the wind field. In addition, the agent can measure the instantaneous wind field and thus has observation on  the one-step transition if the sail is to be deployed (assuming the wind stay constant over the time interval). If the sail is not deployed it moves with the sea current, for which there is no measurement. Using an ON/OFF decision policy, the sailboat can decide whether to deploy the sail or not in each time step, such that it moves according to some probabilistic  specifications.
 
 \begin{figure}[t]
\begin{center}
\includegraphics[width=3.3in]{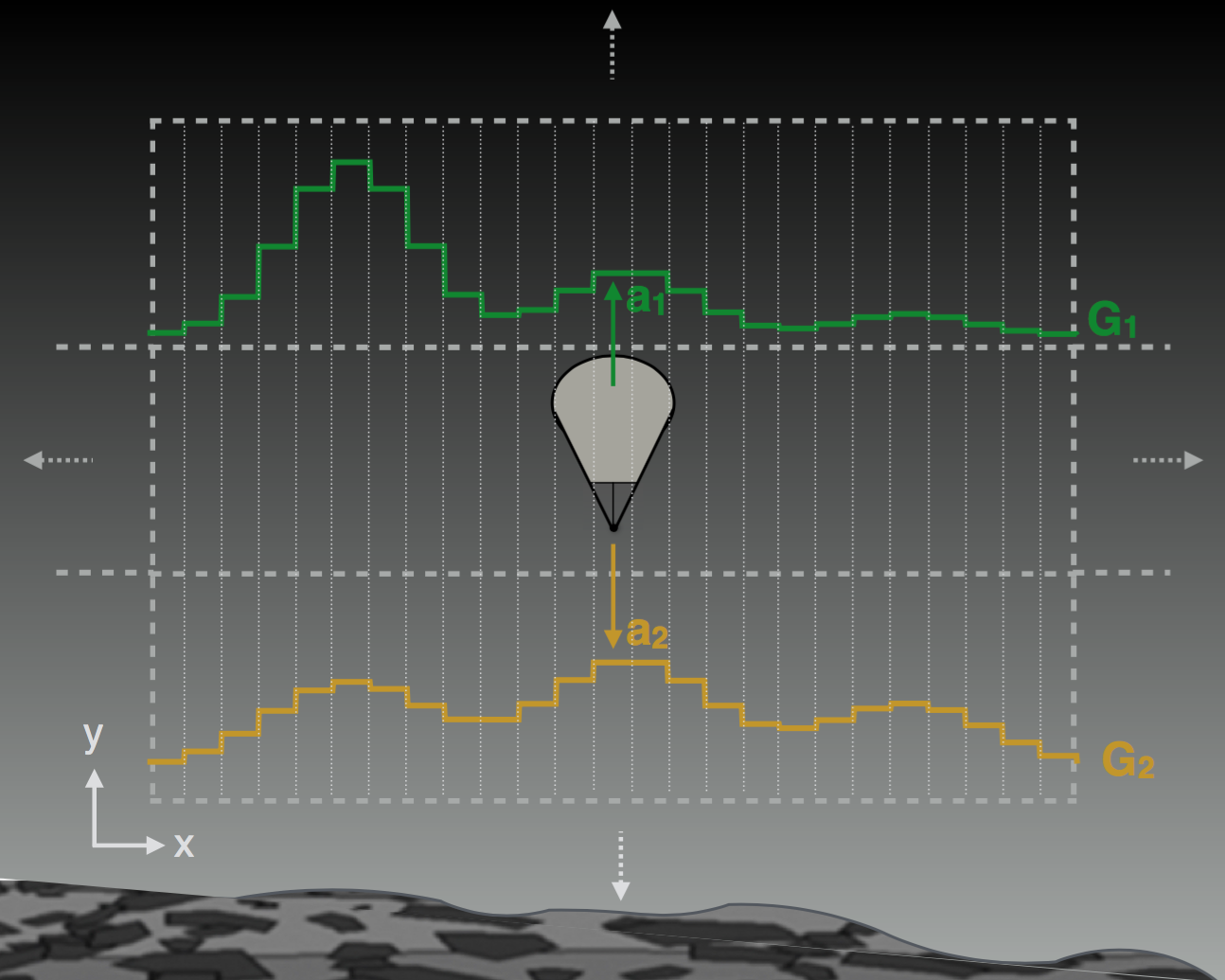}
\end{center}
\vspace{-3mm}
\caption{A simple 2D air balloon illustration with actions, $a_i$. $G_i$ is the discrete probability density distribution  for the $x$-position of the balloon resulting from taking action $a_i$ from its current position. In this example, the balloon observes outcome of an action by changing its altitude based on the action, and   the OFF mode is an MDP \cite{puterman14} running over all actions, hence $G_\text{off}$ is a function of $G_i$'s, i.e., $G_{\text{off}}~=~f(G_1,G_2,...)$.}
\label{balloon}
\end{figure}

 It is noteworthy that the OFF mode captures multiple interesting scenarios: (i) There is only a single action for the OFF mode for which the outcome is not observable; (ii) There are multiple actions for the OFF mode without outcome observations and a standard decision policy running over these actions, which results in an effective transition matrix for the underlying Markov chain; iii) Transitions can be observed for all available actions and there is a standard decision policy that can run over the actions without requiring transition observations, which is utilized as the default policy when the observed action  is rejected.  Note that in the third interpretation, the set of actions for the ON mode and the OFF mode are identical, and a rejected observed action at some instant may be still chosen when the default OFF mode decision policy  is executed after rejection.

%  transitions can be observed for all actions,  and the OFF mode can be considered as a single action where the transition matrix is the underlying Markov chain of an MDP with an existing policy running over all the actions. In this paper, we first show that, when the ON/OFF decision policy is active, the overall density distribution evolves according to a Markov chain that is a linear function of the stochastic environment and the decision policy. Then, we formulate the decision policy synthesis problem as a convex optimization problem where desired safety, transition and convergence constraints on the density are imposed on the Markov chain using the results presented in \cite{swarm_denc15_tac}. 

For the formulation of the ON/OFF policy synthesis problem, we define two decision variables for each action: i) probability of choosing an action, i.e., a mapping from   states to a probability distribution over the finite number actions; ii) probability of accepting the corresponding transition with the chosen action. 
The policy synthesis problem turns out to be bilinear (hence non-convex) in these decision variables. However, applying a change of variable, we show that we can still obtain optimal solutions by solving an equivalent convex linear matrix inequality (LMI) optimization problem. When implementing this method, the decision policies  are computed  offline by solving LMI problem and given to each agent assuming that each agent can observe its current state, measure one-step outcome of a single action for the ON mode, and accept or reject a transition corresponding to the selected action at each time step. If this action is rejected, the agent switches to the OFF mode. In this sense ON and OFF modes can be seen as higher level actions, under which there are the lower level motion actions.
An interesting example to illustrate the concept of ON/OFF agents is controlling air balloons in uncertain wind fields \cite{wolf2010probabilistic} for scientific measurements in Earth and other planets \cite{kuwata2009decomposition,elfes2010implications}. Probabilistic  wind velocity field information, i.e., the probability density functions for the wind speed and direction,  changes as a function of the balloon's altitude. The hot air balloons can change their altitudes to choose the velocity field that they ride with, i.e.,  horizontal motion induced by the wind. Previous research proposed    controlled Markov process models for this example \cite{wolf2010probabilistic}. In our approach, the action in this Markov process model is the choice of altitude (see Figure \ref{balloon}).  Then we can synthesize a default Markov Decision Process (MDP) policy to distribute the balloons based on this prior wind field knowledge (transition probabilities as a function of the altitude), which defines  the OFF mode. We can also design an ON policy, for which we measure the instantaneous velocity observed at different altitudes. We can accept or reject the selected altitude (action) for the ON mode based on the instantaneous velocity observed at this altitude. If it is  rejected based on the observed outcome, we can go to the altitude suggested by the OFF mode policy. Note that, the sets of actions for the ON and OFF modes are the same in this example.   

\section{Related Research}
The proposed Markov model is applicable to both decision-making for single and multi-agent  systems  in stochastic environments. Our particular interest is motivated by 
the problem of guiding a multi-agent system, which has been a recent subject of research in Markov Decision Processes (MDPs) \cite{Sigaud:2010:MDP,El-Chamie:2016IJCAI,El-Chamie:2016SCMDP}. The problem presented here can also be considered as an  MDP \cite{puterman14} with finite number of states and actions, and a new set of observations. However, rather than having a reward function for each action and transition \cite{El-Chamie:2016SOMDP}, the mission objectives here are embedded within the underlying Markov chain.   

Most of the work in the area of MDPs and multi-agent systems focuses on the decentralized control of Markov decision processes (Dec-MDP) and decentralized partially-observed Markov decision processes (Dec-POMDP), e.g., \cite{amato2013decentralized}, where each agent has partial or incomplete observations of its state. These problems, generally, are very difficult to solve and become intractable for large scale problems \cite{bernstein2002}. These problems with incomplete observations  are quite interesting, but not directly comparable to the problem considered in this paper, where we utilize additional measurements for state-transitions. 

Using a Markov chain for guiding large numbers of agents  to a desired distribution is a relatively  new idea. \cite{berman_09} considers a similar problem in the task allocation framework where probability of switching between tasks are designed  to achieve maximum redistribution rate, without any additional constraints.  A different Markov chain based method is proposed  in \cite{ray_09} by
using a probabilistic disablement approach found in \cite{wonham_93}. \cite{hespanha_pg08} uses biased random walk, which leads vehicle positions to evolve toward a probability density. Another method for multi-agent coordination is to use nearest neighbor information to establish  consensus \cite{morse_03, bertsekas_07,sonia_07_tac,roy2015scaled,rahmani2009controllability}. \cite{cortez_04} proposes a method based on locational optimization and centroidal Voronoi diagrams, while \cite{pavone_11, bai2008adaptive}  provide adaptive methods of multi-agent  coordination.
Other stochastic approaches to multi-agent control include gradient-based decentralized controllers \cite{hsieh_08b} utilizing relative positions to neighbors; game theoretical formulations \cite{arslan_07}, where each vehicle is considered as a self-interested player; maze searching techniques \cite{lumelsky_97} and cyclic pursuit strategies \cite{pavone_07}, where each agent pursues its leading neighbor  resulting  in convergence to a prescribed geometric pattern. In \cite{hsieh_08a}, a quorum based method is proposed where the multi-agent system is modeled as a hybrid system.  Reference \cite{sonia_07} presents a comprehensive survey of recently developed theoretical tools for modeling, analysis, and design of motion coordination algorithms. An interesting application introduced in the previous section, which can also be explored in the proposed ON/OFF framework, is the control of hot air balloons in stochastic wind fields for atmospheric science observations \cite{elfes2010implications}. Earlier work converted this motion planning problem into a more standard Markov decision model \cite{kuwata2009decomposition,wolf2010probabilistic}.
The main distinction of our paper from the listed references above is the existence of the ON control mode and its observed actions. This allows us to devise new methods to control the density distribution of autonomous agents via a new Markov decision model with measurements on the state transitions. Measurements for the ON mode can be obtained by the deployment of additional sensors to extend the agents' sensing capabilities. In summary, the key contributions of this paper are: i) Formulation of a new Markov chain synthesis problem through a new Markov decision model, with additional measurements for the state transitions, where a policy is designed to ensure that the desired safety and convergence properties for the underlying Markov chain; ii) Convexification of the synthesis problem; iii) Application of the model to density control of swarm of autonomous mobile agents.

The rest of the paper is organized as follows: Section~\ref{sec:Markovmodel} summarizes the formulation of the density control problem for ON/OFF agents for the case where ON mode has a single action and OFF mode has deterministic transitions; Section~\ref{sec:multipleOn} generalizes the problem to having multiple actions for the ON mode and stochastic transitions for the OFF mode, provides the algorithm for implementation and makes connections to MDPs; Section~\ref{sec:ConvexSynthesis} describes the ergodicity, transition and safety constraints and formulates the convex LMI problem with this constraints and Section~\ref{sec:simulation} has an illustrative example which uses the Markov matrix with the safety constraints. Section~\ref{sec:conclusion} concludes the paper.

\subsection*{Notation}
The following is a partial list of notation used:  $\zero$ is the zero vector/matrix of appropriate 
dimensions; 
 $I$ is the identity matrix; $\one$ is the vector of ones with appropriate dimensions; 
  $\e_i$ is a vector of appropriate dimension  with its $i^{th}$ entry $+1$ and its other entries 
zeros;  $x[i]= \e_i^T x$ for any $x \in \Reals^n$, and $A[i,j]=\e_i^T A \e_j$ for any 
$A \in \Reals^{n \times m}$;
$Q=Q^T \succ (\succeq) 0$ implies that $Q$ is a symmetric positive (semi-)definite matrix; 
$R > (\geq) H$ implies that  $R[i,j]> (\geq) H[i,j]$ for all $i,j$;  $R> (\geq) \zero$ implies that $R$ a 
positive (non-negative) matrix;  $x \in \Probs^{n}$ is said to be a {\em probability vector} if $x \geq \zero$ and $\one^{T} x = 1$; matrix $M \in \Probs^{m\times m}$ is a Markov matrix if $M \geq \zero$ and $\one^T M=\one^T$;
%$B=|A|$ implies that $B[i,j]= |A[i,j]|$ for all $i,j$;  
$\pr$ denotes probability of a random variable;
%$\Reals$ is the set of real numbers; $\Nats_+$ are nonnegative natural numbers;  
$\Reals^n$ is the $n$ dimensional real vector space;
$\mathbb{N}$ is set of nonnegative integers, i.e., $\mathbb{N}=\{0,1,2,\ldots\}$;
$\mathbb{N}^+_n=\{1,2,\ldots n\}$;
$\emptyset$ denotes the empty set;
 %$\|v\|$ is the 2-norm of the vector $v$; For $P=P^T \succ 0$,  $\|v\|_P = \| P^{1/2}v||$ where 
 %$P=P^{1/2}P^{1/2}$ is a factorization of $P$ with $P^{1/2}={P^{1/2}}^T \succ 0$ (for concreteness, one choice is 
 %$P^{1/2}=U\Lambda^{1/2}U^T$ 
 %where $P=U\Lambda U^T$ is an eigenvector decomposition of $P$);  
 $(v_1,v_2,...,v_n)$ represents a vector obtained by augmenting vectors $v_1,\ldots,v_n$ {such that
$
(v_1,v_2,...,v_n) \equiv 
\left[v_1^T \   \  v_2^T\     \ldots  \ v_n^T \right]^T
$
where $v_i$ can have arbitrary dimensions;  $\dg(A)= (A[1,1],\ldots,A[n,n])$ for matrix $A$; $\dg(v)$ is a square diagonal matrix with the elements of vector $v$ on the main diagonals;
%$\conv$ is the operator for convex combination;
 %${\lambda}_{max} (P)$ and ${\lambda}_{min}(P)$
%are maximum and minimum eigenvalues of  $P=P^T$; $\sigma(A)$ is the spectrum (set of eigenvalues) of $A$; $\rho(A)$ is the spectral radius of $A$ 
%($\max_{\lambda \in \sigma(A)} |\lambda|$); $||A||_1=\sum_i\sum_j |A[i,j]|$ denotes the 1-norm of matrix $A$, and 
%$|||A|||_1=\max_j \sum_i |A[i,j]|$ denotes the induced 1-norm of matrix $A$; 
$\otimes$ denotes the Kronecker product; 
$\odot$ represents the Hadamard (Schur) product; $\mI(A)$ is the indicator matrix for any matrix $A$, whose 
entries are given by $\mI( A)[i,j]  = 1$ if $ A [i,j] \neq 0$ and $\mI(A)[i,j] =0$ otherwise. $\eta \sim U(0,1)$ denotes a random variable sampled from the uniform distribution in the interval $[0,1]$. 
%
%A {\it directed graph} $\bG= (\bV,\bE)$ is defined by a finite set of {\it vertices} $\bV$ and {\it edges} $\bE$ such that 
%the edges $\bE\in \bV\times \bV$ contain specified ordered pairs of elements of $\bV$.
%A {\it directed graph $\bG_a(A) = (\bV_a,\bE_a)$ of a matrix}
%$A$ is defined 
%by letting $\bV_a$ be the set of integers $1,2,...,n$ and letting $\bE$ be  
%the set of such pairs $(i,j),\ i\in \bV_a, j\in \bV_a$ for which $A[i,j]\ne 0$. The {\it adjacency matrix} $A_a$ of a  
%graph $\bG= (\bV,\bE)$  is defined such that $A_a[i,j]=1$ if $(i,j)\in\bE$ and $A_a[i,j]=0$ otherwise. 
%In particular, if the graph $\bG_a(A)$ is associated with a matrix $A$, then $A_a=\mI( A)$. 

\section{Markov Chain Model for Density Control of  ON/OFF Agents: Single ON  action case}\label{sec:Markovmodel}
In this section, we introduce the probabilistic density control problem for ON/OFF agents. First, we consider the simplified model of mode-switching ON/OFF agents to demonstrate the concept and formulation that  first appeared in \cite{acc2015_onoff}. Then, we generalize this model in the next section. In both general and simplified models, agents are assumed to know their current states. For the simplified model, we consider the case where the ON mode contains only one action for which  the action outcome is observable, i.e., agents can measure their next state at a given instant if they take the action (i.e., they decide to be in the ON mode). However agents  do not have this  transitional   observation  for the single action in the OFF mode. For ease of demonstration of the idea, we assume that the stochastic transition matrix for the action in OFF mode is an identity matrix. Note that having an identity matrix  means that the outcomes of the action for the OFF mode are deterministic (they do not require measurement of the transitions), which is a trivial case and it will be generalized much further in the next section. This assumption makes the initial formulation more transparent. Furthermore, this  simpler model also captures an interesting interpretation of the density control problem as presented in \cite{acc2015_onoff}, that is, ON/OFF agents are moving with the dynamics induced by the stochastic environment when they are ON and they are staying stationary when they are OFF. 

 The density control problem for this simplified model can be formulated  as a Markov chain synthesis problem. For that, we define a finite set of states   $\mathcal{S} = \{ s_1,\ldots,s_n \} $, that is, there is  a discrete state space with cardinality $n$ and $s_j$ is referred to as ``$j^{th}$ state" in remainder of the paper.  We consider a discrete-time system where $s(t) \in \mathcal{S}$ is the state of the agent at time epoch $t$, i.e., $s(t)=s_i$ is the event that the state is the $i^{th}$ state at time $t$.
Then, probability density distribution $x(t) \in \mathbb{P}^n $ is defined as:
\begin{equation} \label{eq:pdf}
x[i](t) = \pr \{ s(t) = s_i \} , \qquad i \in \mathbb{N}^{+}_n, \ \ t \in \mathbb{N},
\end{equation}
where $t$ is the discrete time index. 
Hence $x[i](t)$ is the probability of a mobile agent to be in the $i^{th}$ state at time~$t$.
In the rest of the section, we present the  formulation for  the time evolution of the density $x(t)$ as the following Markov chain, which is  
  defined over the state-space $\mathcal{S}$:
\begin{equation} \label{eq:markov}
x(t+1)=Mx(t),
\end{equation}
where $M$ is the transition matrix, i.e., $M[i,j]=\pr \{s(t+1)=s_i | s(t)=s_j\}$.  The transition matrix $M$ is a function of the stochastic environment and the  ON/OFF policy, as will be explained  next. 
%The density evolves at discrete time epochs $t=0,1,\ldots$, and $s(t)$ and $a(t)$ are the state and action at the $t$'th time epoch. 
We define the following events to properly define the stochastic environment and the decision policy, for $t \in \mathbb{N}$:
\begin{align*}
y(t+1)=s_l & \ \text{: Observing a transition to state $s_l$} \\
\sigma(t)=\text{$\sigma_{on}$} & \ \text{: Accepting to execute the action for the}\\
& \hspace{4mm} \text{ON mode}.
\end{align*}
Note that we have two {\em modes} of operation $\sigma(t)~ \in~ \{ \sigma_{on},\sigma_{\off}\}$.
Even though $y(t+1)$ has a time index $t+1$, this observation occurs at time $t$. In particular, observation of a transition is different from the actual transition taking place. For example, $y(t+1)~=~s_l$ is the  event that the stochastic environment would have caused  a transition to $l^{th}$ state at time $t+1$ if  the action for the ON mode were to be accepted at time $t$ (i.e., observing one-step ahead in the future), whereas $s(t+1)~=~s_l$ is the event  that the transition to $l^{th}$ state has actually occurred. 

The stochastic environment is defined with the transition matrix $G\in \mathbb{P}^{n\times n}$, where $G[i,j]$ is the probability of observing a transition from $j^{th}$ state to $i^{th}$ state when the action for the ON mode is taken, i.e.,
\begin{equation} \label{eq:G_def}
G[i,j]\!\!:= \pr\{y(t\!+\!1)=s_i| s(t)=s_j\}.\end{equation}
%%%%%%%%%
%Hence, the swarm density distribution evolves according to the following Markov chain when all agents are ON, 
%\begin{equation}\label{eq:ol_dyn} 
%\mathbf{x}(t+1) =  G \, \mathbf{x}(t).
%\end{equation} 
%
In this paper, the environment transition matrices and the decision policy are assumed to be time-invariant (i.e., the processes are stationary), hence the corresponding Markov chain transition matrix given in (\ref{eq:markov}) is also time-invariant. Our objective is to synthesize  a decision policy for an agent to accept or reject the corresponding transition observed for the action in the ON mode at each time epoch, i.e. to decide whether it should be ON or OFF,  such that the resulting Markov chain will satisfy the desired transition and safety constraints while guiding the density distribution to a desired final distribution. In this section, the state of the agent is assumed  not to change if it is OFF, i.e., $s(t+1)=s(t)$ when $\sigma(t)\!=\!\sigma_{\off}$.   Then the probabilistic ON/OFF  decision policy is defined by a matrix $K \!\in\! \Reals^{n\times n}$ (to be designed) that satisfies:
\begin{equation}
\zero \leq K \leq \one \one^T, \quad \dg(K) =\one, 
\end{equation}
where $K[i,j]$ is the probability of being ON, given that the transition from $j^{th}$ state to $i^{th}$ state is observed, i.e., the acceptance probability of the environmentally induced motion determined by G: 
\begin{equation} \label{eq:K_def}
K[i,j] = \pr\{\sigma(t) = {\sigma_{on}}|s(t)=s_j, y(t+1)=s_i\}.
\end{equation}
Note that for the transition $i \rightarrow i, \ \forall i$, accepting or rejecting the transition corresponds to the same outcome in this case (in  this section). Hence, the diagonal elements of acceptance matrix $K$ are set to $1$.
We can now give the resulting ON/OFF decision-making policy for an agent in Algorithm~\ref{alg1}.
\IncMargin{0.5em}
\begin{algorithm}\label{alg1}
 \SetAlgoLined 
 \LinesNumbered
\SetKwInOut{Input}{Inputs}
\Input{$K$ (matrix designed offline), $\mathcal{S}, t_{\rm max}$}
%  \KwResult{how to write algorithm with \LaTeX2e } 
 \For{$t\leftarrow 1$ \KwTo $t_{\rm max}$}{
    Observe the current state,   $s(t)\in \mathcal{S}$ (assume $s(t)=s_j$) \;
    Generate a random number $\eta(t) \sim U(0,1)$\;
        Observe the  transition outcome if the agent is in the ON mode, i.e.,  $y(t+1)$ (assume $y(t+1)=s_i$)\;
      \eIf{$\eta(t) \in [0,K[i,j]]$}{The agent switches to the mode ON $\sigma(t) = {\sigma_{on}}$, and $s(t+1)=y(t+1)$\;} {
      The agent switches to the mode OFF $\sigma(t) = {\sigma_{\off}}$, and $s(t+1)=s(t)$\;}}
     \caption{ON/OFF Decision-Making Policy -- Single action case}
\end{algorithm}

 Suppose that $M$ is the effective Markov matrix for the system when the  ON/OFF policy in Algorithm~\ref{alg1} is active. The next theorem shows that $M$ is a linear function of the design matrix $K$. This linearity property will be used for a convex synthesis of the algorithm design matrix $K$ so that the matrix $M$ satisfies some favorable properties as convergence and safety as we will show later on in the paper.  

 %More formal proof of this formulation is presented in the following lemma.
\begin{theorem}  \label{lem:MKdyn} 
Consider a system of single or multiple ON/OFF agents moving in a stochastic environment defined by a finite number of states $\mathcal{S}$ with the transition probabilities given by $G$ as in (\ref{eq:G_def}) for the ON mode and with no transitions for the OFF mode.  Suppose that each agent executes  the ON/OFF decision-making Algorithm~\ref{alg1}
with  the matrix $K$  defined in (\ref{eq:K_def}). Then the density distribution $x(t)$ defined in (\ref{eq:pdf}) evolves based on a  Markov chain as in (\ref{eq:markov}), where
the Markov matrix $M \in \Probs^{n \times n}$ is given as follows: For  $ i,j \in \mathbb{N}^{+}_n$,\vspace{-2mm}
\begin{equation}\label{eq:GK}
M[i,j]=
\begin{cases}
  G[i,j] \, K[i,j]  & \text{ if } \quad i \neq j,  \\
   1 -   \sum\limits_{\substack{ k=1 \\k \neq j}}^n G[k,j] K[k,j] & \text{ if } \quad i = j.
\end{cases}
\end{equation}
%
%\begin{equation}\label{eq:GK}
%\begin{split}
%M[j,i] &= G[j,i] \, K[j,i]  , \quad j \neq i,  \\
%M[i,i] & = 1 -  \sum_{k \neq i}^n G[k,i] K[k,i].
%\end{split}
%\end{equation}
In matrix form, the above relationship is equivalent to \vspace{-2mm}
\begin{equation} \label{eq:M_cl}
M = G \odot K + \dg (\one^T -\one^T (G \odot K)).
\end{equation}
\end{theorem}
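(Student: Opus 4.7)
The plan is to compute $M[i,j] = \pr\{s(t+1) = s_i \mid s(t) = s_j\}$ directly by conditioning on the two random choices made at step $t$ in Algorithm~\ref{alg1}, namely the observation $y(t+1)$ and the mode decision $\sigma(t)$, and then to recast the resulting entrywise formula into matrix notation. I would split into the off-diagonal case ($i \neq j$), where only the ON branch can carry the agent away from $s_j$, and the diagonal case ($i = j$), where both the ON branch (conditioned on $y(t+1) = s_j$) and the OFF branch contribute.

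For the off-diagonal case, since Algorithm~\ref{alg1} forces $s(t+1) = s_j$ whenever $\sigma(t) = \sigma_{\off}$, the event $\{s(t+1) = s_i\}$ with $i \neq j$ coincides with the joint event $\{\sigma(t) = \sigma_{on},\, y(t+1) = s_i\}$. Factoring this joint probability as
\begin{equation*}
\pr\{\sigma(t) = \sigma_{on} \mid y(t+1) = s_i,\, s(t) = s_j\} \cdot \pr\{y(t+1) = s_i \mid s(t) = s_j\}
\end{equation*}
and substituting the definitions (\ref{eq:G_def}) and (\ref{eq:K_def}) immediately gives $M[i,j] = K[i,j]\,G[i,j]$, which is the first branch of (\ref{eq:GK}).

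For the diagonal case, the quickest argument is to invoke column-stochasticity of $M$: summing the off-diagonal entries already obtained yields $M[j,j] = 1 - \sum_{k \neq j} G[k,j]K[k,j]$, the second branch. A direct derivation via the law of total probability is also available, where one combines the ON self-transition contribution $G[j,j]K[j,j] = G[j,j]$ with the OFF contribution $\sum_k (1 - K[k,j])G[k,j]$ and simplifies using $\sum_k G[k,j] = 1$. Passing to the matrix form (\ref{eq:M_cl}) is then a bookkeeping step: the $j$-th diagonal entry of $\dg(\one^T - \one^T(G \odot K))$ equals $1 - \sum_k G[k,j]K[k,j]$, and adding it to $(G \odot K)[j,j] = G[j,j]$ (using $K[j,j] = 1$) reproduces the diagonal formula while leaving the off-diagonal entries equal to $(G \odot K)[i,j]$.

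The argument is essentially routine once the conditioning structure of Algorithm~\ref{alg1} is identified. The one subtle point is the reconciliation of the two distinct self-transition mechanisms contributing to $M[j,j]$; the normalization $\dg(K) = \one$ is precisely what makes the column-sum and total-probability derivations agree, and explains why the diagonal entries of $K$ are prescribed rather than being free design variables in this simplified single-action model.
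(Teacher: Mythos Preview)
Your proposal is correct and follows essentially the same approach as the paper: the off-diagonal computation via the joint event $\{\sigma(t)=\sigma_{on},\,y(t+1)=s_i\}$ and the chain-rule factorization into $G[i,j]K[i,j]$ is identical, and your direct total-probability derivation for the diagonal matches the paper's decomposition into the OFF event plus the ON self-transition. The only cosmetic difference is that you also note the column-stochasticity shortcut for $M[j,j]$ (valid since $M$ is a transition matrix by definition), whereas the paper computes the diagonal directly and then separately verifies $M\ge\zero$ and $\one^TM=\one^T$ from the formula.
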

%-
\begin{proof}
Transition from the $j^{th}$ state to the $i^{th}$ state ($i\neq j$) can only take place when that transition is observed (line~$4$ in Algorithm~\ref{alg1}) and accepted (line~$6$ in Algorithm~\ref{alg1}):
\begin{equation}
\begin{split}
M[i,j] &= \pr\{s(t+1)=s_i|s(t)=s_j\} \\
&=\pr\{\sigma(t) = {\sigma_{on}},y(t+1)=s_i|s(t)=s_j\} 
 \end{split}
\end{equation}
Using Bayes' Rule, we have:
\begin{equation}
\begin{split}
M[i,j]&=\pr\{y(t\!+\!1)=s_i| s(t)=s_j\}\times \\
& \pr\{\sigma(t) = {\sigma_{on}}|s(t)=s_j, y(t+1)=s_i\} \\
&= G[i,j]K[i,j]
 \end{split}
\end{equation}
The system stays at the $j^{th}$ state if either (a) the observed state at line~$4$ in Algorithm~\ref{alg1} is rejected (line~$8$ in Algorithm~\ref{alg1}) or (b) the $j^{th}$ state is observed at  line~$4$ in Algorithm~\ref{alg1} and is accepted (line~$6$ in Algorithm~\ref{alg1}), then \vspace{-2mm}
\begin{align*}
M[j,j]&=\pr\{s(t\!+\!1)=s_j| s(t)=s_j\}  \\
&\hspace*{-0.7cm}=\pr\{\sigma(t) = {\sigma_{\off}}|s(t)=s_j\} \\
&\hspace*{0.5cm}+\pr\{\sigma(t) = {\sigma_{on}},y(t+1)=s_j|s(t)=s_j\}\\
&\hspace*{-0.7cm}=1-\textstyle{\sum\limits_{k=1}^n\pr\{\sigma(t) = {\sigma_{on}},y(t+1)=s_k|s(t)=s_j\} }\\
&\hspace*{0.5cm}+\pr\{\sigma(t) = {\sigma_{on}},y(t+1)=s_j|s(t)=s_j\}\\
&\hspace*{-0.7cm}=1 -  \textstyle{\sum\limits_{\substack{k=1 \\ {k \neq j}}}^n G[k,j] K[k,j]}
\end{align*}

Note that we can verify that $M  \in \Probs^{n \times n}$ by showing that $M \!\geq\! 0$ and $\one^T M \!=\! \one^T$.
\paragraph*{$M \!\geq\! 0$}  Letting $S\!:=\!G\!\odot \! K$,  since $0\!\leq \!G, \, K \!\leq\! \one \one^T $, we have  $\one^T S \!=\! \one^T (G\odot K) \leq \one^T G\!=\! \one^T.$ Hence $\one^T\! \!-\one^T S \!\geq\! \zero$, which implies that $M\!=\!S\!+\dg(\one^T \!-\! \one^T S ) \!\geq\! \zero$.
 \paragraph*{$\one^T M \!=\! \one^T$} Let $\xi^T \!:=\! \one^T G\odot K$. Then $\one^T M \!=\! \xi^T \!+\! \one^T \dg (\one^T \!-\! \xi^T)= \xi^T  \!+\! \one^T (I\!-\!\dg (\xi) ) = \xi^T \!+\! \one^T \!-\! \one^T \dg (\xi) = \xi^T\!+\! \one^T \!-\!\xi^T =\one^T$.
\end{proof}
{\bf Remark:} The formulation for Markov matrix $M$ given in (\ref{eq:GK}) is quite intuitive. The probability of making transition $i\rightarrow j$ is simply the environment  induced probability of this transition times the probability of accepting this transition. The diagonal entries of $M$ are set so that the resulting Markov matrix  satisfies the column stochasticity property.$\hfill \blacksquare$

%\vspace{2mm}
%\fbox{
%\begin{minipage}{6in}
%\underline{\bf ON/OFF Policy for Density Control} \\
%{\bf Inputs:} $K$ \\
%Each agent:
%\begin{enumerate}
%	\item Determines its current state,   $s(t)=s_j$.
%	\item Observe the next achievable transition, $y(t+1) = s_i$.
%	\item Generates a random number $z(t)$ that is uniformly distributed in $[0,1]$.
%	\item The agent switches to the mode ON if $z_k \in [0,K[i,j]] $, and mode OFF otherwise.
%\end{enumerate} 
%\end{minipage}}
%\vspace{3mm}

%The next result gives another  interpretation of the resulting behavior of the swarm  distribution (see \cite{swarm_auto} for a proof). 
%%
%\begin{lemma} \label{lem:lln}
%Consider  $N$ agents where  $x_1(0)\!=\!\ldots\!=\!x_N(0) \!=\! x_0 \! \in \! \Probs^m$  with $x_k, \ k\!=\! 1,2,...,$ defined as in (\ref{eq:xt}). Further, suppose that each agent  uses the ON-OFF policy described by the matrix $K$ resulting in a density evolution described by (\ref{eq:M_dyn}) with the matrix $M$ as in (\ref{eq:M_cl}).
%Then,  
% %and 
%\begin{equation} \label{eq:x_interp1}
%x[i](t) \!=\! \pr (r(t) \in R_i ) \!=\! \bE \left(\frac{\bn[i](t) }{N}\right) , \quad  i\!=\!1,...,m,
%\end{equation}  
%$ \bE$ is the expectation operator, and $\bn(t)$ is the vector of the number of agent with  states in each bin at $t$. Furthermore,  
%\begin{equation}\label{eq:xL_1}
%\pr \left( \lim_{N \to \infty}\frac{\bn (t)}{N}=  x (t)  \right) = 1, \quad t=0,1,\ldots.
%\end{equation}  
%\end{lemma}
%\vspace{5pt}

%---
\section{Generalization of Density Control Problem for Multiple ON Actions}\label{sec:multipleOn}
In this section, we present the main result of this paper which is the generalization of the ON/OFF decision control policy problem for the case where the ON mode encapsulates multiple actions and the OFF mode features a single action that does not necessarily correspond to ``no motion". As explained earlier: in the ON mode, the ``next step" outcomes of the actions are observable, while a Markov chain, $G_{\off}$, is propagated when OFF mode is chosen.  
Now, we can have multiple actions in the ON mode whose transitions can be observed, i.e., 
\begin{equation} \label{eq:multi_ON}
\sigma(t) = \sigma_{on}    \ \implies \ a(t) \in  \mathcal{A}_{on} =\{ a_1,\ldots,a_m\},
\end{equation}
where $a(t)$ is the action taken in the ON mode. In the simple ON/OFF case of the previous section, we had $\mathcal{A}_{on}=\{a_1\}$, hence we did not need to explicitly define actions. Since we have multiple actions in the ON mode now, we have to explicitly identify them. 

Using the same definitions for $x(t)$ and $M$ given by (\ref{eq:pdf}) and (\ref{eq:markov}), we will formulate the Markov matrix $M$ as a function of the stochastic environment and the actions determined by a predetermined ON/OFF policy. For this general case, we expand the definitions of the probabilistic events for $t \in \mathbb{N}$: 
\begin{align*}
y(t+1)=s_l  \ &\text{: Observing a transition to state $s_l$} ,\\
v(t)= a_{k}   \ &\text{: Observing the outcome of taking action} \\
&\quad \text{$a_{k}$ from $\mathcal{A}_{on}$} ,\\
a(t)=a_k  \ &\text{: Accepting to execute action $a_k$}.
\end{align*}
Comparing to the previous model with single action, the addition here is the event ``$v(t)= a_{k}$", which is used to define the probability of choosing an action whose outcome will be observed.  
The stochastic environment is defined with the transition matrices $G_k, G_{\off} \in \mathbb{P}^{n \times n}, \ k \in \mathbb{N}^{+}_m$ where $G_k[i,j]$ gives the probability of observing a transition from $j^{th}$ state to $i^{th}$ state, given that the $k^{th}$ action is selected to be observed, i.e.,
\begin{equation} \label{Gk_def}
G_k[i,j]  = \pr \{ y(t+1) =s_i| s(t)=s_j, \, v(t) = a_k \}, 
\end{equation}
where $i,j  \in \mathbb{N}^{+}_n, \ \ k \in  \mathbb{N}^{+}_m$
and similarly $G_{\off}[i,j]$ defines the corresponding transition probabilities for the action in the OFF mode (e.g., $G_{\off} =I$ when being OFF means  no motion as in the previous section). 
The model for ON/OFF decision-making has the following assumptions (see Fig.~\ref{multiaction}):
\begin{itemize} 
\item Each agent measures its own  state at time instant $t$.
\item Agent chooses a single action for the ON mode, say $a_k$, whose outcome will be observed, i.e.,  $v(t)=a_k$.
\item Agent accepts or rejects to take the observed action.
\item If action is accepted then it is taken, i.e., $a(t)=a_k$, and transitions occurs according to $G_k$. 
\item If action is rejected, the agent chooses the OFF mode, $\sigma(t)=\sigma_{\off}$ and the transition occurs according to $G_{\off}$.
\end{itemize}
\begin{figure}[h!]
\begin{center}
\includegraphics[width=3.2in]{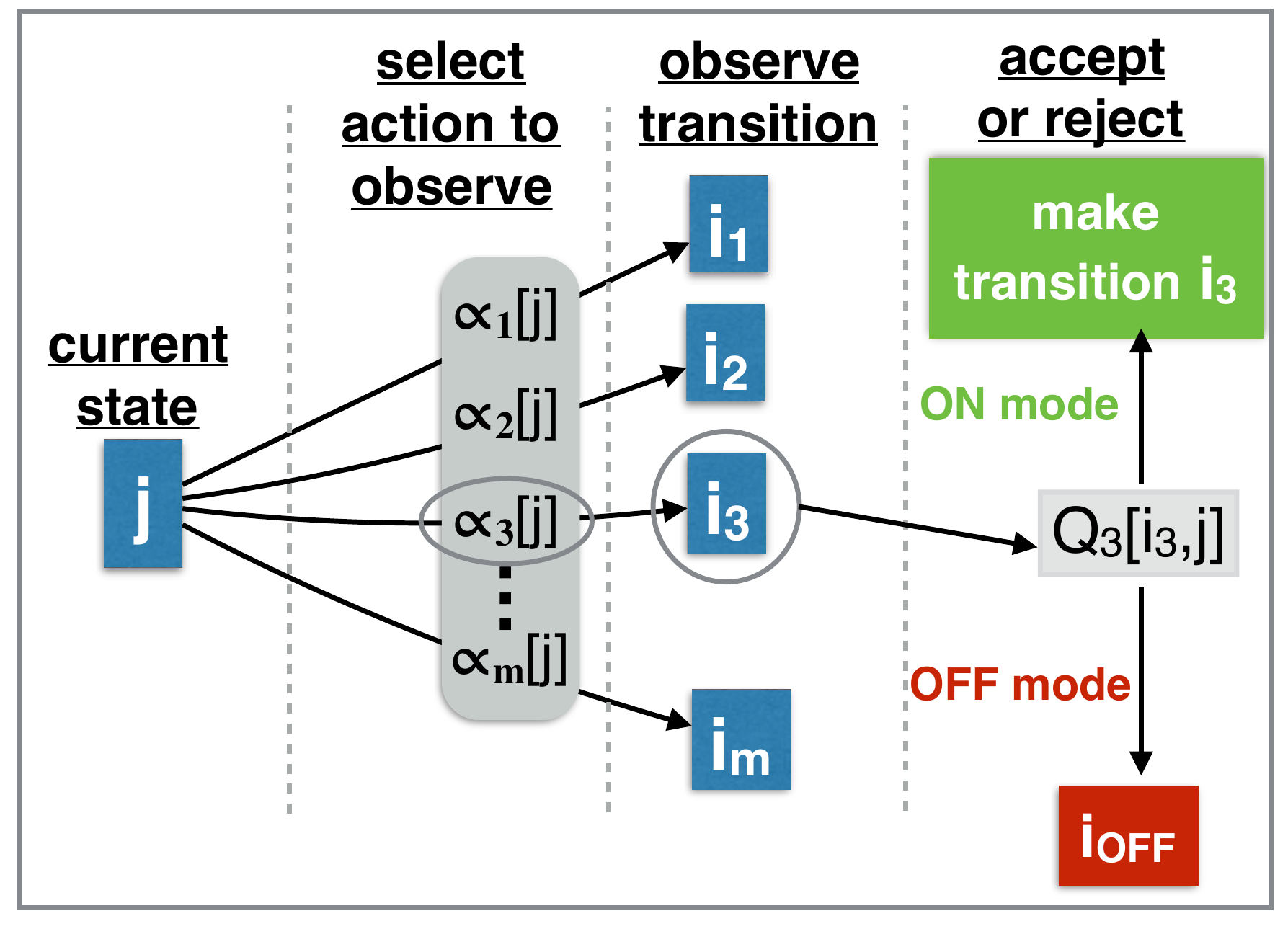}
\end{center}
\vspace{-5mm}
\caption{Implementation of the decision policy.}
\label{multiaction}
\end{figure}
{\bf Remark:} In the above setting, $G_{\off}$ can have two interpretations: (i) the environment transition matrix when there is only one action for which the outcome is not observable; (ii) the effective transition matrix when there are multiple actions with a prescribed decision policy. For the latter case, $G_{\off}$ is a Markov matrix  defining the underlying Markov chain of an MDP with an existing policy running over the actions, which is discussed in more detail in the next subsection.
$\hfill \blacksquare$
\\[2pt]
%Note that the environmoent transition matrices are stationary (i.e., they don't change from one epoch to another, but the model can be easily generalized to take non-stationary processes into consideration). In matrix form, $G_1, ..., G_m$ and $G_{\off}$ are all in $\mathbb{P}^{n \times n}$ (nonnegative column stochastic matrices).  We assume that at a given time epoch $t$, the agents 1) access their current state, 2) decide an action, 3) observe next state transition, and 4) accept or reject such a transition.
% However, the policy (relying on the environmental transition matrices $G_k$) should determine beforehand what actions the agents should choose and whether to accept or reject when they encounter different transitions. 
In the generalized model, we consider two sets of decision variables (to be designed offline): 
\begin{eqnarray}
\alpha _k[j] &= &\pr\{v(t)=a_k| s(t)=s_j\} \hspace*{2.5cm}\label{eq:alpha}\\
Q_k[i,j] & = &\pr\{a(t)=a_k|s(t)=s_j, v(t)=a_k, \nonumber \\
  && \hspace{3.5cm}y(t+1)=s_i\} \label{eq:Qk}
\end{eqnarray}
with $k \in \mathbb{N}^{+}_m, \ i,j \in \mathbb{N}^{+}_n$. Namely, $\alpha _k[j]$ is the probability of choosing action $a_k\in \mathcal{A}_{on}$ at state $s_j$ and $Q_k[i,j]$ is the probability 
of accepting an achievable transition $j\rightarrow i$ observed as an outcome of taking an action $a_k$.

Clearly $Q_k$ matrices must be non-negative, $Q_k~\in~[0,1]^{n\times n},\ k \in  \mathbb{N}^{+}_m$. Also, non-negative action variables $\alpha _k$ should satisfy the inequality $\sum_{k=1}^m\alpha _k[j]\leq 1,\  j\in \mathbb{N}^{+}_n$.
It turns out in our model (which will become more clear later) that we can combine these two variables by considering the change of variables
\begin{equation} \label{eq:Pk}
P_k := Q_k \dg (\alpha_k) = Q_k \odot (\one \alpha_k^T), \quad  k \in \mathbb{N}_m^+.
\end{equation}
The following property holds for $P_k$ matrices,
\begin{equation}
P_k  \leq \one \, \alpha_k^T  , \ \ \ k \in  \mathbb{N}^{+}_m.
\end{equation} 
This inequality can simply be proven by contradiction. Suppose there exist $i$ and $j$ such that $P_k[i,j]>\alpha _k[j]$, then $Q_k[i,j]=P_k[i,j]/\alpha _k[j] >1$ which is a contradiction because $Q_k[i,j]\in [0,1]$.

We can now give by Algorithm~\ref{alg2} the ON/OFF decision-making policy for the general case. For this algorithm, we define variable $\phi \in \Reals^{m+2}$ where
$$
\phi[1] = 0,\ \phi[r] = \sum_{l=1}^{r}\alpha_l, \ \phi[m+2]=1
$$
where $r = 2,\ldots, m+1$. 
\begin{algorithm}[h!]  \label{alg2}
	\SetAlgoLined
	\LinesNumbered
\SetKwInOut{Input}{Inputs}
\Input{$\{\alpha _k,Q_k: k \in \mathbb{N}^{+}_m\}$ (designed offline), $\mathcal{S}, t_{\rm max}$}
	%  \KwResult{how to write algorithm with \LaTeX2e }
   \For{$t\leftarrow 1$ \KwTo $t_{max}$}{
		Determine current state $s(t)\in  \mathcal{S}$ (assume $s(t)=s_j$)\;
		Generate a random numbers $\mu(t) \sim U(0,1)$ and $\eta(t) \sim U(0,1)$\;
				\eIf{$\mu(t)\in [\phi[k] , \   \phi[k+1] )$}{
			$v(t)=a_k$\;
			Observe the next achievable transition for $a_k$: $y(t+1)$ (suppose
			$y(t+1) = s_i$)\;
				\If{$\eta(t) \in [0,Q_k[i,j]]$}{The agent switches to the mode ON, $a(t) = a_k$ and $s(t+1)=s_i$ \;}
		   } 
		   {
			The agent switches to the mode OFF $a(t) = a_\off$, and $s(t+1)$ transitions according to $G_{\off}$\;
		}
	}
	\caption{ON/OFF Decision-Making Policy -- General case}
\end{algorithm}

The following theorem presents the key  result in converting the ON/OFF decision policy  design problem  into a Markov chain synthesis problem.
%---
\begin{theorem} \label{lem:Mdyn}
Consider a system of single or multiple mode switching ON/OFF agents moving in a stochastic environment defined by a finite number of states $\mathcal{S}$ with the transition probabilities given by $G_k$ as in (\ref{Gk_def}) for the $k^{th}$ action $a_k \in \mathcal{A}_{on}$ in the ON mode and $G_{\off}$ for the OFF mode. Suppose that each agent executes  the ON/OFF decision-making Algorithm \ref{alg2}
with the matrices $Q_k$  as in (\ref{eq:Qk}) and the vectors $\alpha_k$ as in (\ref{eq:alpha}). Then the state p.d.f. $x(t)$, defined in (\ref{eq:pdf}), evolves based on the  Markov chain   (\ref{eq:markov}) with the Markov matrix $M \in \Probs^{n \times n}$  given by 
 \begin{equation}\label{eq:Mmat}
M \!=\! \sum_{k=1}^m  G_k  \odot P_k  \, + \,  G_{\off} \odot \left(\one \left(\one^T \!-\! \one^T   \sum_{k=1}^m  G_k \! \odot\! P_k \right) \right) ,
\end{equation}
where $P_k, \, k \in \mathbb{N}^+_m$ are given by (\ref{eq:Pk}) and they satisfy
\begin{equation}\label{eq:Pk_sat}
\sum_{k=1}^m {\max_{i\in  \mathbb{N}^{+}_n} P_k[i,j]} \leq 1, \quad j \in \mathbb{N}^{+}_n.
\end{equation}
\end{theorem}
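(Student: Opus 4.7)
The plan is to derive the matrix $M$ entrywise by mirroring the structure of the proof of Theorem~\ref{lem:MKdyn}, now accounting for three new ingredients: multiple actions in the ON mode, a stochastic (rather than identity) OFF transition matrix $G_{\off}$, and two distinct pathways into the OFF mode. Specifically, I would compute $M[i,j] = \pr\{s(t+1)=s_i \mid s(t)=s_j\}$ by partitioning on whether the mode ultimately taken is ON (and if so, which action $a_k$) or OFF, then assemble the resulting scalar identity into matrix form using the Hadamard product.

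For the ON contribution, Bayes' rule applied along the event chain ``choose $v(t)=a_k$, observe $y(t+1)=s_i$, accept,'' together with the independence of $\mu(t)$ and $\eta(t)$ built into Algorithm~\ref{alg2}, yields
\[
\pr\{a(t)=a_k,\, s(t+1)=s_i \mid s(t)=s_j\} \;=\; \alpha_k[j]\, G_k[i,j]\, Q_k[i,j] \;=\; G_k[i,j]\, P_k[i,j],
\]
where the second equality is the change of variable (\ref{eq:Pk}). Summing over $k \in \mathbb{N}_m^+$ produces the first term $\sum_{k=1}^m G_k \odot P_k$ of (\ref{eq:Mmat}).

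For the OFF contribution, I would observe that OFF is reached by two disjoint mechanisms: directly when $\mu(t) \ge \phi[m+1]$, with probability $1 - \sum_k \alpha_k[j]$, or indirectly after an action is selected but its observed transition is rejected, with probability $\sum_k \alpha_k[j]\sum_i G_k[i,j](1 - Q_k[i,j])$. Adding these and exploiting column-stochasticity of each $G_k$ collapses the total to $\pr\{\sigma(t)=\sigma_{\off}\mid s(t)=s_j\} = 1 - \sum_k \sum_i G_k[i,j] P_k[i,j]$. Multiplying by $G_{\off}[i,j]$ and rewriting in matrix form recovers the second term of (\ref{eq:Mmat}). This collapse of the two OFF pathways into a single complementary expression is, in my view, the main technical step, because it is tempting to add the two mechanisms separately and keep $\alpha_k[j]$ explicit, which would not match the clean form in (\ref{eq:Mmat}).

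To finish, I would verify (\ref{eq:Pk_sat}) by noting $P_k[i,j] \le \alpha_k[j]$ (since $Q_k \le 1$ entrywise), so that $\sum_k \max_i P_k[i,j] \le \sum_k \alpha_k[j] \le 1$. As a sanity check that $M \in \Probs^{n\times n}$, non-negativity reduces to non-negativity of each entry of $\one^T - \one^T \sum_k G_k\odot P_k$, which follows from (\ref{eq:Pk_sat}) combined with column-stochasticity of each $G_k$; column-stochasticity of $M$ itself then follows by the same two-line manipulation used at the end of the proof of Theorem~\ref{lem:MKdyn}, since the coefficient multiplying $G_{\off}$ is exactly the missing mass in column $j$ of $\sum_k G_k \odot P_k$.
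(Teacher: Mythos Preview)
Your proposal is correct and follows essentially the same approach as the paper's proof: partition $M[i,j]$ into ON and OFF contributions, expand the ON term via Bayes' rule along the chain $v(t)\to y(t+1)\to a(t)$ to get $\sum_k G_k[i,j]P_k[i,j]$, obtain the OFF probability by complementarity, multiply by $G_{\off}[i,j]$, and then verify $M\in\Probs^{n\times n}$ using (\ref{eq:Pk_sat}) together with column-stochasticity of each $G_k$. The only cosmetic difference is that the paper writes $\pr\{\sigma(t)=\sigma_{\off}\mid s(t)=s_j\}=1-\pr\{\sigma(t)=\sigma_{on}\mid s(t)=s_j\}$ directly rather than summing your two explicit OFF pathways, so the ``collapse'' you flag as the main technical step is absorbed in one line there; the resulting expression and the stochasticity check are identical.
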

\begin{proof} The probability of making transition from $j^{th}$ state to $i^{th}$ state can be written as the following sum, since being ON and being OFF are mutually exclusive events:
\begin{equation}\label{eq:markov_sum}
\begin{split}
M[i,j] =  &\pr\{s(t\!+\!1)=s_i|s(t)=s_j\} \\
= &\underbrace{ \pr \{ \sigma(t)= \sigma_{on},\, s(t\!+\!1)=s_i|s(t)=s_j\}}_{:=T_1}\\
+&\underbrace{\pr\{\sigma(t)=\sigma_{\off},\, s(t\!+\!1)=s_i|s(t)=s_j\}}_{:=T_2}.
\end{split}
\end{equation}
In Algorithm \ref{alg2}, since executing the ON mode implies that an action from $\mathcal{A}_{on}$ must be taken, the first term can be written as the summation over all actions for the ON mode:
\begin{eqnarray*}
T_1 &=& \sum_{k=1}^m {\pr\{s(t\!+\!1)\!=\!s_i,a(t)\!=\!a_k|s(t)\!=\!s_j\}} \\
&=& \sum_{k=1}^m {\pr\{s(t\!+\!1)\!=\!s_i,a(t)\!=\!a_k,} \\
&&\hspace{1cm} v(t)\!=\!a_k|s(t)\!=\!s_j\}
\end{eqnarray*}
The second equation above follows from the fact that $v(t)$ should always precedes $a(t)$, i.e.,  $\pr(v(t)=a_k|a(t)=a_k)=1$. 
% By applying consecutively Bayes' rule and Law of total probability \cite{chung} to the term inside the sum, we obtain:
By applying Bayes' rule \cite{chung} to the term inside the sum, we obtain:
\begin{equation*}
\begin{split}
&T_1= \sum_{k=1}^m {\pr \{s(t\!+\!1)\!=\!s_i,a(t)\!=\!a_k|v(t)\!=\!a_k, } \\
&\hspace{2cm}  s(t)\!=\!s_j \} \underbrace{\pr\{v(t)\!=\!a_k|s(t)\!=\!s_j\}}_{\displaystyle \alpha_k[j]}.
\end{split}
\end{equation*}
Since observing transitions to distinct states are mutually exclusive events,
\begin{eqnarray}
%\begin{split}
&T_1=\sum\limits_{k=1}^m \bigg[\sum\limits_{l=1}^n {\pr\{s(t\!+\!1)\!=\!s_i,a(t)\!=\!a_k,} \nonumber\\
&\hspace{2cm} y(t\!+\!1)\!=\!s_l| v(t)\!=\!a_k,s(t)\!=\!s_j\}\bigg] \alpha_k[j] \nonumber
\end{eqnarray}
Applying Bayes' rule, we obtain:
\begin{eqnarray}
&=\sum\limits_{k=1}^m \bigg[\sum\limits_{l=1}^n {\pr\{s(t\!+\!1)\!=\!s_i,a(t)\!=\!a_k|} \nonumber\\
&\hspace{1cm}y(t\!+\!1)\!=\!s_l,v(t)\!=\!a_k,s(t)\!=\!s_j\} \nonumber\\
&\hspace{1cm} \times \underbrace{\pr\{y(t\!+\!1)\!=\!s_l|v(t)\!=\!a_k,s(t)\!=\!s_j\}}_{G_k[l,j]}\bigg]\alpha_k[j] \nonumber\\
&=\sum\limits_{k=1}^m\bigg[\sum\limits_{l=1}^n {\pr\{s(t\!+\!1)\!=\!s_i|y(t\!+\!1)\!=\!s_l,} \nonumber\\
& \hspace{1cm}a(t)\!=\!a_k, v(t)\!=\!a_k,s(t)\!=\!s_j\} \nonumber\\
&\hspace{1cm}\times \underbrace{\pr\{a(t) \!=\! a_k| y(t\!+\!1)\!=\!s_l,v(t)\!=\!a_k,s(t)\!=\!s_j\}}_{Q_k[l,j]} \nonumber\\
&\hspace{1cm} \times G_k[l,j]\bigg]\alpha_k[j]. \label{eq:temp1}
%\end{split}
\end{eqnarray}
Note that, if the outcome of  $a_k$ is observed ($v(t)~=~a_k, \, y(t+1)~=~s_l$) and this action is accepted ($a(t)=a_k$), then the observed transition takes place with probability 1. In this case, a transition to any other state can take place only if the OFF mode is selected, i.e., $\sigma(t)=\sigma_{\off}$. Hence, we have 
\begin{equation} \label{eq:temp2}
\begin{split}
\pr\{s(t+1)&=s_i|y(t+1)=s_l,a(t)=a_k, \\
&v(t)=a_k,s(t)=s_j\}=\delta_{il},
\end{split}
\end{equation} 
where $\delta_{il}$ is the {\em Kronecker delta} operator, i.e., $\delta_{il}=0,\  \text{if} \  i\neq l$ and $\delta_{il}=1, \ \text{if} \ i=l$. 
We obtain the following equation by combining (\ref{eq:temp1}) and (\ref{eq:temp2}),
\begin{equation} \label{eq:temp3}
\begin{split}
T_1 &= \sum_{k=1}^m \bigg[\sum_{l=1}^n {\delta_{il}Q_k[l,j]G[l,j]\bigg]}\alpha_k[j] \\
&= \sum_{k=1}^m {Q_k[i,j]G_k[i,j]\alpha_k[j]}.
\end{split}
\end{equation}
Now, consider the second term in (\ref{eq:markov_sum}):
\begin{equation*}
\begin{split}
T_2&=\pr\{\sigma(t)=\sigma_{\off},s(t+1)=s_i|s(t)=s_j\}\\
&=\underbrace{\pr\{s(t+1)=i|s(t)=s_j,\sigma(t)=\sigma_{\off}\}}_{G_{\off}[i,j]}\\
& \hspace{2cm} \times \underbrace{\pr\{\sigma(t)=\sigma_{\off}|(s(t)=s_j\}}_{1-\pr\{\sigma(t)=\sigma_{on}|s(t)=s_j\}}.
\end{split}
\end{equation*}
Here, given the current state, the probability of being ON is sum of the probabilities of taking each action for the ON mode, i.e.,
\begin{equation*}
\begin{split}
\pr\{\sigma(t)& = \sigma_{on}|s(t)=s_j\} \\
&= \sum_{l=1}^n {\pr\{\sigma(t) = \sigma_{on}, s(t+1)=s_l|s(t)=s_j\}}\\
& =\sum_{k=1}^m \sum_{l=1}^n {Q_k[l,j]G_k[l,j]\alpha_k[j]},
\end{split}
\end{equation*}
which follows from (\ref{eq:temp3}).
Now, combining the expressions we get for $T_1$ and $T_2$, the equation (\ref{eq:markov_sum}) becomes:
\begin{equation*}
\begin{split}
M[i,j] =& \sum_{k=1}^m {Q_k[i,j]G_k[i,j]\alpha_k[j]}  \\
& +G_{\off}[i,j]\left(1-\sum_{k=1}^m \sum_{l=1}^n {Q_k[l,j]G_k[l,j]\alpha_k[j]}\right)
\end{split}
\end{equation*}
which can be written in compact matrix notation as follows
\begin{equation}
\begin{split}
M =& \sum_{k=1}^m  G_k  \odot Q_k \odot (\one\alpha_k^T) \,  \\
     &+\,  G_{\off}\! \odot\! \left(\one \left(\one^T   \!-\! \one^T   \sum_{k=1}^m  G_k \! \odot\! Q_k \!\odot\! (\one\alpha_k^T)  \right) \right)  .
\end{split}
\end{equation}
With the change of variables given in (\ref{eq:Pk}), the equation given above is equivalent to (\ref{eq:Mmat}).
%Therefore once the matrix valued functions $P_k$, $k=1,...,m$, are synthesized, the randomized ON/OFF policy $\pi$ can be  determined,  
%$$
%\pi = (D_0, D_1,D_2,\ldots).
%$$
%Given the above formulation, we can now describe the Markov chain resulting from the ON/OFF actions. Let $x(t) \in \mathbb{P}(\mathcal{S}) $ be the discrete probability distribution    
%\begin{equation} \label{eq:pdf}
%x[i](t) = \pr \{ s(t) = s_i \} , \qquad i=1,...,n, \ \ t \in \mathbb{N},
%\end{equation}
%then 
%\begin{equation} \label{eq:chain}
%x(t+1) = M(t)\, x(t) , \qquad t \in \mathbb{N}
%\end{equation}
%  where, for $i,j=1,...,n$,
%
\\
%\todo{FIX}
%\todo[inline]{FIX the proof- the positivity is argued wrong}
Finally, we will show that $M \in \mathbb{P}^{n \times n}$. For nonnegativity, we will consider $M$ in two terms. As both terms corresponds to probabilistic quantities, they both must be nonnegative. The nonnegativity of the first term, $\sum_{k=1}^m G_k \odot P_k$ is clear since $G_k \geq 0$ and $P_k \geq 0$, $k\in\Nm$. For the nonnegativity of the second term, i.e.,
$$G_{\off}\! \odot\! \left(\one \left(\one^T   \!-\! \one^T   \sum_{k=1}^m  G_k \! \odot\! P_k\right)\right)$$ 
we need to show
\begin{equation}\label{eq:temp4}
\one^T   \sum_{k=1}^m  G_k  \odot P_k \leq \one^T.
\end{equation}
%%%
Note that $\one^T   \sum_{k=1}^m  G_k  \odot P_k$ can be written in index notation as follows: For $j\in \Nm$,
\begin{equation}
\one^T   (\sum_{k=1}^m  G_k  \odot P_k )e_j = \sum_{i=1}^n \sum_{k=1}^m G_k[i,j]P_k[i,j].
\end{equation}
Here, $\sum_{k=1}^m P_k[i,j] \leq 1$  for any $i$ and $j$ since 
$$\sum_{k=1}^m P_k \leq \sum_{k=1}^m\one \alpha_k^T \leq \one\one^T,$$ 
and $\sum_{i=1}^n G_k[i,j] = 1$ for any $j$ and $k$ since $G_k$ is column stochastic.
Hence,
\begin{eqnarray*}
\sum_{i=1}^n \sum_{k=1}^m G_k[i,j]P_k[i,j] &=& \sum_{k=1}^m \underset{i \in \Nn}{\operatorname{conv}} P_k[i,j] \\
&\leq& \sum_{k=1}^m  \max_{i\in  \mathbb{N}^{+}_n} P_k[i,j] \\
&\ \leq& 1. 
\end{eqnarray*}
The last inequality follows from (\ref{eq:Pk_sat}), which then implies that
$$
\one^T   (\sum_{k=1}^m  G_k  \odot P_k )e_j \leq 1, \ j \in \Nn,
$$
and hence (\ref{eq:temp4}) holds. This concludes that $M~\geq~0$.

With $G_{\off} \in \mathbb{P}^{n\times n}$, the above inequality implies that $$ G_{\off} \odot \left(\one \left(\one^T   - \one^T   \sum_{k=1}^m  G_k  \odot P_k \right) \right) \geq \zero,$$ and hence $M \! \geq \! \zero$. Next let $H:=\sum_{k=1}^m  G_k  \odot P_k  $, then
\begin{eqnarray*}
\one^T M &=& \one^T H + \one^T G_\off  \odot (\one (\one^T -\one^T H) ) \\
&=& \one^T H + \one^T\left(G_\off - G_\off  \odot (\one \one^T H)\right) \\
&=& \one^T H + \one^T - \one^T \left( G_\off  \odot (\one \one^T H) \right).
\end{eqnarray*}
Let $\phi :=  H^T \one$,  then
\begin{eqnarray*}
\one^T M &=&\phi^T + \one^T - \one^T \left( G_\off  \odot (\one \phi^T) \right)\\
&=& \phi^T + \one^T - (\one^T G_\off)\odot \phi^T \\
&=& \phi^T + \one^T -\one^T \odot \phi^T = \phi^T + \one^T -\phi^T = \one^T,
\end{eqnarray*}
hence $M \in \mathbb{P}^{n \times n}$.
\end{proof}

Theorem~\ref{lem:Mdyn} shows that $M$ is a linear function of $\{P_k : k=1,\dots, m\}$.  This linearity property will be used for a convex synthesis of $P_k$ so that the matrix $M$ satisfies some favorable properties as convergence and safety as we will show later on in the paper.  The algorithm design parameters $\alpha_k$ and $Q_k$ can then be extracted from $P_k$ as we will show next. Thus the design of $\{P_k : k=1,\dots, m\}$ is an intermediary step to set the parameters of Algorithm~\ref{alg2}. 
\subsection{Extraction of $\alpha_k$ and $Q_k$ from $P_k$}

Once $P_k$ matrices are computed (which will explained in later sections), the selection of $\alpha_k$ and $Q_k$ can be done in multiple ways, that is, the same $P_k$ matrices can be parameterized in multiple ways via $\alpha_k$ and $Q_k$.  The choice must preserve the following conditions on $\alpha_k$ and $Q_k$ (since they contain probabilities of events as entries)
\begin{equation} \label{eq:Qalpha_props}
0 \leq Q_k \leq \one \one^T , \ 0\leq \alpha_k \leq \one , \, k\in \Nm , \quad \sum_{k=1}^m \alpha_k \leq \one. 
\end{equation}
Our default parameterization is:
\begin{equation} \label{eq:alpha_c} 
\alpha_k[j] = \max _{i\in \mathbb{N}_n^+} P_k[i,j], \ \ k \in \mathbb{N}_m^+,\ \ j\in \mathbb{N}_n^+.
\end{equation}
Hence the last inequality in (\ref{eq:Qalpha_props}) is satisfied (due to (\ref{eq:Pk_sat})), and we can choose $Q_k$ as 
\begin{equation} \label{eq:Q_c}
Q_k = P_k \ \dg(\alpha_k)^{-1},\ \ k \in \mathbb{N}_m^+ .
\end{equation}
Here, $Q_k$ is obtained by dividing each entry in a column of $P_k$ by the maximum element in that column, hence $0 \leq Q_k \leq \one \one^T$. For the same reason, note that any choice of $\alpha_k$ greater than or equal to the choices given in (\ref{eq:alpha_c}) would have resulted in feasible $Q_k$.
Also observe  that this particular choice $\alpha_k$ allows ``no action observed" cases, since it leads to $\sum_{k=1}^m \alpha_k \leq \one$ (the  sum does not have to be one). We can normalize $\alpha_k$'s such that an action is always observed, without changing the resulting $M$, as follows: Form a matrix $\Gamma$ with $\alpha_k$'s computed via (\ref{eq:alpha_c}) as its columns. Then compute a new set of $\alpha_k$'s by using the following expression and $Q_k$'s by using (\ref{eq:Q_c}),
\begin{equation}\label{eq:alpha_c2}
\Lambda:=\left[ \alpha_1 \ldots \alpha_m \right] = \dg(\Gamma \, \one)^{-1} \, \Gamma.
\end{equation}
Note that $\sum_{k=1}^m \alpha_k = \Lambda \one = \dg(\Gamma \one)^{-1} \Gamma \one = \one $. Since the second choice of $\alpha_k$ always produces values that are at least as large as the first choice in (\ref{eq:alpha_c}), we ensure that  $0 \leq Q_k \leq \one \one^T$.

\subsection{Connections to Markov Decision Processes }
Any designed matrices $\{P_k, \ k\in \mathbb{N}_m^{+}\}$ for the mode-switching agent model given in this paper defines a Markov chain for the system (\ref{eq:markov}) whose transition matrix $M$ is determined by Theorem~\ref{lem:Mdyn}. Hence, the model can be considered as a \emph{controlled Markov chain} model, and inherently there is a connection to Markov Decision Processes (MDPs). Notwithstanding this, the main objectives of the mode-switching agent is to shape the transition matrix $M$ to achieve a density distribution with some favorable properties such as \emph{safety} or a desired stationary distribution as we will discuss later in the paper. MDPs on the other hand, select policies that optimize a reward-based function. While we do not explicitly define a reward function, an optimized decision-making policy can be implicitly embedded into the system through the OFF mode.  Since MDP models in general do not observe the outcome of actions before transition, the  OFF mode can correspond to  a Markov chain resulting from an  MDP with a standard decision policy.  Hence, the  transition matrix $G_{\off}=M^{\pi}_{\rm MDP}$ would correspond to an MDP with a reward-optimized policy $\pi$. This default MDP policy may not satisfy some of the desired steady-state distribution or the safety constraints, for which we utilize the additional  observations in the ON mode via the synthesized decision policies  to achieve these design specifications.

\begin{figure}[ht]
\begin{center}
\includegraphics[width=2in]{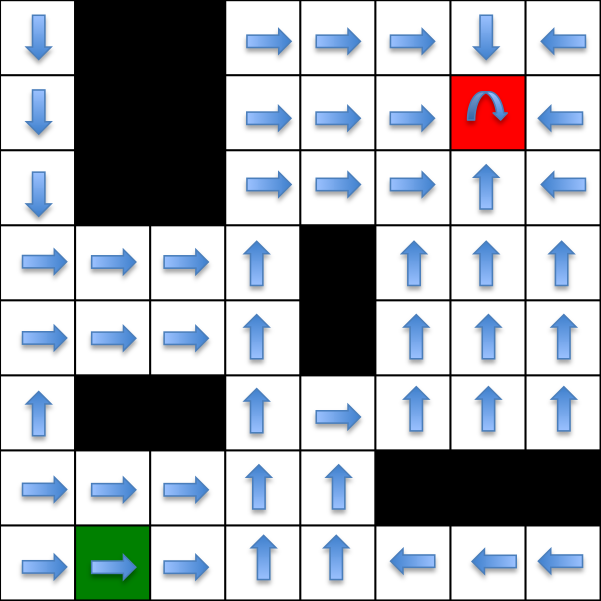}
\end{center}
\caption{An MDP optimal policy for deterministic motion planning where the objective is to go from the green bin to the red bin. Black squares are obstacles.}
\label{MDPgrid}
\end{figure}

To elaborate more on the connection with MDPs, we give in Figure \ref{MDPgrid} a simple motion planning example to show that the mode-switching agent model can implicitly incorporate a reward-optimizing MDP policy. The objective of the simplified motion planning example is to send an agent from  a source state (the green bin) to a destination state (the red bin) using the least number of transitions (i.e., moving along the shortest path). The arrows in the figure show \emph{an optimal MDP policy} to go from any state  to the destination state using a shortest path. This optimal policy can be obtained using the classical backward induction (dynamic programming) algorithm. Furthermore, in this example, environment is non-stochastic  and thus transitions due to the actions of the optimal policy are assumed deterministic. Note that if many agents start from the same green bin, they will all follow the same path. Thus, if bins are subject to varying capacity constraints (i.e., each bin has a maximum capacity for the number of agents that can be present in that bin at any time instant), then  the optimal MDP policy violates the constraints.  
The mode-switching agent model given in this paper can be used to handle such a situation as we argue next. Let the OFF mode be the transition matrix corresponding to the optimal MDP policy given in Figure \ref{MDPgrid}. Since the environmental transitions are deterministic, then so are the observations. Thus, $\{\alpha _k, k\in\mathbb{N}_{m}^+\}$ boils down to the probability of choosing an action deviating from the optimal policy to satisfy the capacity constraints. We can impose such capacity constraints directly on the resulting transition matrix of the system after substituting the relevant quantities of the problem in equation \eqref{eq:markov} to obtain a ``randomized'' policy that deviates from the optimal policy to satisfy the imposed constraints. These type of constraints are a  special type of \emph{safety} constraints that are discussed in more details further in the paper. It is worth noting that, in this example, both ON and OFF modes have the same set of actions, where the OFF mode executes a standard MDP policy (e.g., choosing actions for the shortest path) while the ON mode selects an action to observe  in order to deviate from the policy in the OFF mode. 

% {\bf Remark:} As seen in the example above, our model does not have to have an OFF mode with different actions than those of the ON mode, i.e., we can also handle a system with no distinct OFF mode. This situation can be handled  simply by declaring the OFF mode as choosing an action for the ON mode without an observation. In this case, we will go through the ON mode as discussed earlier. And if the observed action is rejected, we will make another action choice from the same set of actions without any observation based on a standard MDP policy (which may result in taking  the  action we just rejected in the ON mode).     $\hfill \blacksquare$

{\bf Remark:}  It is reasonable to consider a cost for having an observation causing the deviation from an optimal policy. However, having costs for observations and comparing the effect of deviation on the overall performance of the resulting MDP is an ongoing research direction and is not pursued further in this paper. $\hfill \blacksquare$
%
%\todo[inline]{For MAHMOUD}
%\todo[inline]{Explain how to deal with the case when there is no separate OFF action}
%
\section{Convex Synthesis of Safe Markov Chain for ON/OFF Agents}\label{sec:ConvexSynthesis}
We can  describe the decision policy design problem for ON/OFF agents as follows: Design $Q_k, \alpha _k, \ k \in \mathbb{N}^{+}_m$,  such that the density, as defined in (\ref{eq:pdf}), should satisfy the following constraints for $t \in \mathbb{N}$:
\begin{eqnarray}
&\nonumber \hspace{-6cm}\rm{Transition:}\\
 & \pr \{s(t\!+\!1) \!=\! s_i | s(t) \!=\!  s_i \} \!=\! 0 \ {\rm if}\  A_a[j,i] \!=\!  0, 
 \\
 &\nonumber \hspace{-6.5cm}\rm{Safety:}\\
 & Lx(t)  \leq  p , \ \ \forall x(t) \leq q,\label{eq:safety1}\\
 &\nonumber \hspace{-5.5cm}\rm{Convergence:}\\
  & \lim_{t \to \infty} x(t) = v, \ \ \forall x(0) \in \mathbb{P}^n,\label{eq:convergence}
\end{eqnarray}
where $v$  is a desired, discrete,  probability distribution,  and $A_a$ is the adjacency matrix defining the allowable transitions between states between two consecutive time steps, $L$, $q$, and $p$ are given matrices and arrays specifying safety constraints. In this section, we provide brief discussions on these constraints and express them as equivalent convex constraints on the Markov matrix $M$. Since the matrix $M$ depends linearly on the ON/OFF decision policy matrices $P_k$ as in (\ref{eq:Mmat}), this will imply that the constraints will be convex constraints on $P_k$'s, which are our design variables. Once $P_k$'s  are  computed, we can choose $\alpha_k$'s and $Q_k$'s by using (\ref{eq:alpha_c}) or (\ref{eq:alpha_c2}) and (\ref{eq:Q_c}).
\subsection*{Transition Constraints:} It is useful to impose constraints on the physically realizable state transitions by specifying some entries of the matrix $M$ as zeros. In the case of ON/OFF agents, these constraints are automatically ensured by stochastic transition matrices: If a state transition is simply not possible naturally, then the corresponding entry in the matrix $G_k, \ \forall k$ is zero, which implies that it is also zero in the matrix $M$, i.e., $M[i,j]=0$ if $G_k[i,j]=0, \forall k$.  If additional transitions are also needed to be eliminated due to mission specific reasons, we can impose additional constraints beyond the ones that are naturally imposed by matrix $G$ with the following constraint:
\begin{equation} \label{eq:transition}
(\one \one^T-A_a^T)\odot M = \zero
\end{equation}
where $A_a$ is the adjacency matrix, i.e., $A_a[i,j]=1$, if transition $i \rightarrow j$ is allowable and $A_a[i,j]=0$ otherwise.
Note that, this constraint is already linear, hence convex in $M$. 
\subsection*{Safety Constraints:}We consider a general form of linear density safety constraints: 
\begin{equation}\label{eq:safety}
L x(t)  \leq  p, \ \ \forall x(t) \leq q
\end{equation} which covers several types of safety constraints with different selections of $L$, $p$ and $q$. Two examples of these constraints are:
(i) Density upper bound constraints; (ii) Density rate constraints.
Density upper bound constraint ensures that the density of each state stays below a prescribed value,
that is,  
\begin{equation} \label{eq:dup}
x(t)\leq d, \quad t=1,2,\ldots
\end{equation} 
where $\zero\leq d \leq \one$ defines the density upper bounds in each state  and  it is assumed that   $x(0)\leq d$. Note that, this form can be obtained by letting $L=M$, $q=d$ and $p=d$ in the general form given in (\ref{eq:safety}). The density rate constraint is used to limit the rate of change of density for each state of Markov chain. \begin{equation} \label{eq:flowup}
-f\leq x(t+1)-x(t)\leq f, \quad t=0,1,\ldots,
\end{equation}  
where $f\geq \zero$ bounds the flow rate. Note that this is equivalent to case where 
$$L= \begin{bmatrix}
M-I\\
I-M
\end{bmatrix}, \  \  q= \begin{bmatrix}
f\\
f
\end{bmatrix} , 
$$ 
 and $p=d$ in (\ref{eq:safety}). For more examples on the linear safety constraints captured by the general form and for the proof of Lemma~\ref{thm:safetygeneral}, see \cite{auto2015}.

The safety constraints given in (\ref{eq:safety}) are ensured by the following lemma, which gives necessary and sufficient conditions for safety as linear inequalities on $M$.

% \todo[inline]{NAZLI-CITE IEEE TAC Paper}
\begin{lemma}\cite{swarm_denc15_tac,auto2015}\label{thm:safetygeneral}
Consider  the  Markov chain given by (\ref{eq:markov}).  Then, 
\begin{equation}\label{eq:gen_safety}
Lx(t) \!\leq\! q, \ \ \forall  x(t) \leq p,  
\end{equation} 
 {\em if and only if} 
there exist $S\!\in \!\Reals^{n\times n}$ and $y \! \in \!\Reals^n$ such that 
\begin{equation}\label{eq:dual_syn}
\begin{split}
S \! \geq \! \zero, \quad 
\left[L+S+y \one^T \right] \geq 0, \\
y+q \geq \left[L+S+y \one^T \right]p.
%M+S^T+y \one^T \geq 0,  \\
%y -(S^T \bd + y (\one^T\bd) + M \bd) \geq -\bd.
\end{split}
\end{equation}  
\end{lemma}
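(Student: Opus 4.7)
The plan is to prove the equivalence in two directions. The sufficient direction is a short direct computation, while the necessary direction uses LP duality applied row by row.

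For sufficiency, suppose $S \geq \zero$ and $y$ are given with $R := L + S + y\one^T \geq \zero$ and $Rp \leq y + q$. For any probability vector $x$ with $\zero \leq x \leq p$ and $\one^T x = 1$, I would exploit $\one^T x = 1$ to rewrite
$$Lx = (L + S + y\one^T)x - Sx - y(\one^T x) = Rx - Sx - y.$$
Since $R \geq \zero$ with $x \leq p$ gives $Rx \leq Rp$, and $S \geq \zero$ with $x \geq \zero$ gives $Sx \geq \zero$, we conclude $Lx \leq Rp - y \leq q$, establishing the safety constraint.

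For necessity, the key observation is that the implication ``$\zero \leq x \leq p$, $\one^T x = 1 \Rightarrow Lx \leq q$'' is equivalent, row by row, to $\max_{x \in \Delta} L_i^T x \leq q_i$ for every row $L_i^T$ of $L$, where $\Delta = \{x : \zero \leq x \leq p, \one^T x = 1\}$ is a non-empty polytope (assuming $p \geq \zero$ and $\one^T p \geq 1$ so $\Delta \neq \emptyset$; otherwise the implication is vacuous and the conditions are trivially satisfiable). I would apply strong LP duality to each such primal program. The primal LP $\max L_i^T x$ subject to $x \leq p$, $-x \leq \zero$, $\one^T x = 1$ has a dual with multipliers $s_i \geq \zero$ (for $x \leq p$), $t_i \geq \zero$ (for $-x \leq \zero$), and free $y_i$ (for the equality $\one^T x = 1$), with stationarity forcing $t_i = s_i + y_i \one - L_i \geq \zero$ and dual objective $s_i^T p + y_i$. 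Strong duality then yields that the primal value is at most $q_i$ iff there exist dual feasible $(s_i, y_i)$ with $s_i^T p + y_i \leq q_i$.

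The final step is to stack these row-wise dual certificates into the matrix form of the lemma. Placing $s_i^T$ into the $i$-th row of a matrix $S$ and the $y_i$ into a vector $y$, the row-wise conditions $s_i \geq \zero$, $s_i + y_i \one - L_i \geq \zero$, and $s_i^T p + y_i \leq q_i$ aggregate, after standardizing sign conventions so that the second slack inequality reads $L + S + y\one^T \geq \zero$, into the three matrix inequalities in (\ref{eq:dual_syn}). The main obstacle I anticipate is aligning sign conventions and row/column orientations to match exactly the paper's expressions; the underlying content is standard LP duality (equivalently, a polyhedral Farkas argument), whose strong-duality hypothesis is automatic since $\Delta$ is a bounded polytope so the primal LPs have finite, attained optima.
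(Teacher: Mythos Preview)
The paper does not actually prove this lemma; it is quoted from \cite{swarm_denc15_tac,auto2015} and the text explicitly defers the proof to those references. Your approach---direct verification for sufficiency and row-wise LP strong duality for necessity---is the standard argument and is almost certainly what is done in the cited papers, so in spirit you are aligned with the intended proof.

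Your sufficiency argument is clean and correct. For necessity, your LP dual is set up correctly, but the final identification of variables is not merely a ``sign convention'' issue as you suggest. Placing your $s_i$ (the multipliers on $x\le p$) as the rows of $S$ gives $S+y\one^T-L\ge 0$ and $Sp+y\le q$, which are \emph{not} the conditions in (\ref{eq:dual_syn}). The correct matching uses your \emph{other} multiplier: from stationarity $s_i-t_i+y_i\one=L_i$ you get $L_i+t_i-y_i\one=s_i$. Taking the $i$-th row of the lemma's $S$ to be $t_i^T$ (the multiplier on $x\ge 0$) and the lemma's $y$ to be the negative of your equality multiplier then yields exactly (\ref{eq:dual_syn}): the row of $L+S+y\one^T$ becomes $s_i^T\ge 0$, and $(L+S+y\one^T)p\le y+q$ becomes $s_i^Tp\le q_i-y_i$, which is your dual objective bound. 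With this correction the argument is complete.
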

So far, we have obtained the linear equivalent conditions on Markov matrix for the transition, safety and ON/OFF constraints. 
 Hence, for the convex optimization problem, we define a set for feasible Markov matrices which satisfy safety, transition and ON/OFF constraints:
 $$
 \mathcal{M}_F = \{M \in \mathbb{P}^{n\times n} : \text{M satisfies  (\ref{eq:Mmat}), (\ref{eq:Pk_sat}), (\ref{eq:transition}), (\ref{eq:dual_syn}) \}}.
 $$
%  Since $(\ref{eq:Pk_sat})$ is equivalent to having $\sum_{k=1}^m {P_k[i,j]} \leq 1, \quad j \in \mathbb{N}^{+}_n, i\in  \mathbb{N}^{+}_n$,  then we can describe the convex set $\mathcal{M}_F$ as follows:

\vspace{2mm}
\fbox{
\begin{minipage}{3in}
\begin{equation} \label{syn_lmi1} 
\begin{split}
&M \in \mathcal{M}_F  \quad \text{\em if and only if:} \\
&  M \ge \zero, S\ge \zero, P_k \ge \zero \text{ for } k=1\dots m,\\
& \one^TM=\one^T, \\
& (\one \one^T -A_a^T) \odot M = \zero, \\
&[L+S+y\one^T] \geq 0, \\
& y+q \geq [L+S+y\one^T]p, \\
&M = \sum_{k=1}^m  G_k  \odot P_k  \, \\
&\hspace{0.8cm}+ \,  G_{\off} \odot \left(\one \left(\one^T   - \one^T   \sum_{k=1}^m  G_k  \odot P_k \right) \right), \\
%&\sum_{k=1}^m {P_k} \leq \one\one^T
&\sum_{k=1}^m {\max_{i\in  \mathbb{N}^{+}_n} P_k[i,j]} \leq 1, \quad j \in \mathbb{N}^{+}_n
 \end{split}
\end{equation}
\end{minipage}}\vspace{4mm}

Note that the last inequality above in (\ref{syn_lmi1}) can be written by using linear inequalities, hence it is a convex constraint. To see that define 
$$Z_j : = \left[ P_1(:,j) \  P_2(:,j) \ \ldots \  P_m(:,j) \right] , \ j=1,...,n,
$$ 
where $P_k(:,j)$ is the $j$'th column of $P_k$. Then we can replace the last inequality by the following inequalities
$$
Z_j \leq \one \beta_j^T, \ \ \one^T \beta_j \leq 1, \ \ \ j=1,...,n,
$$
where $\beta_j$'s are $m\times 1$ slack variables. 

% \vspace{2mm}
% \fbox{
% \begin{minipage}{3in}
% \begin{equation} \label{syn_lmi1} 
% \begin{split}
% &M \in \mathcal{M}_F  \quad \text{\em if and only if:} \\
% & \one^TM=\one^T,\quad M \ge \zero, \\
% & (\one \one^T -A_a^T) \odot M = \zero \\
% &S\geq \zero, \qquad [L+S+y\one^T] \geq 0, \\
% & \qquad y+q \geq [L+S+y\one^T]p \\
% &M = \sum_{k=1}^m  G_k  \odot P_k  \, \\
% &\hspace{1cm}+ \,  G_{\off} \odot \left(\one \left(\one^T   - \one^T   \sum_{k=1}^m  G_k  \odot P_k \right) \right) \\
% &\zero \leq P_k \leq \one\one^T \\
% &\sum_{k=1}^m {\max_{i\in  \mathbb{N}^{+}_n} P_k[i,j](t)} \leq 1, \quad j \in \mathbb{N}^{+}_n
%  \end{split}
% \end{equation}
% \end{minipage}}\vspace{4mm}

% Once the set for feasible Markov matrices is constructed, we can find the feasible ON/OFF decision variables $Q_k$ and $\alpha_k$, $k\in\mathbb{N}_m^{+}$, using:
% \begin{equation*}
% \begin{split}
% \alpha_k[j](t) &= \max_{i\in  \mathbb{N}^{+}_n} P_k[i,j](t), \quad j \in \mathbb{N}^{+}_n \\
% Q_k[i,j] &= P_k[i,j] / \alpha_k[j], \quad i,j \in \mathbb{N}^{+}_n 
% \end{split}
% \end{equation*}
\subsection*{Formulation of Convergence/Coverage Constraints:} As a part of the mission objectives for multi-agent systems, the agent distribution $x(t)$ is required to converge to a desired distribution $v \in \Probs^n$, i.e., 
\begin{equation}\label{eq:x2v}
 \tgi x(t) = v \ \   \forall \, x(0) \in \Probs^n.
\end{equation}
Since $M$ is a Markov matrix, hence column stochastic, a necessary condition for the desired convergence is that the desired distribution $v \in \Probs^n$ is an eigenvector of $M$: 
\begin{equation}
Mv =v.
\end{equation}
Notice that $Mv =v$ are simple linear equalities, and hence they are convex constraints but are not sufficient for convergence. In order to obtain the convex equivalent of the ergodicity constraint ($\tgi x(t) = v \ \   \forall \, x(0) \in \Probs^n$), there are three possible approaches. The first approach is to use the results in \cite{ajc_13} which developed  a sufficient condition  for ergodicity based on the LMI characterizations of the stability of discrete-time systems  (by leveraging results from  \cite{oliveria_bmi}).  The sufficient condition is : There exist $P=P^T\succ \zero$, $\lambda \in [0,1)$, and $F$ such  that 
\begin{equation}
%\begin{array}{l}
%\underset{M,P}{\text{min}} \quad \one^T(\one-\dg(M)) \ subject \ to \ M \in \mathcal{M}\ and \\
 \begin{bmatrix}
\lambda^2P & (M-v\one^T)^TF^T \\
F(M-v\one^T) & F+F^T-P
\end{bmatrix} \succeq 0.
%\end{array} 
\label{eq:min_prob}
\end{equation}
The above condition is an LMI for prescribed  values of $\lambda$ and $F$ which we typically choose to be $F=\dg{(v)}^{-1}$. A more detailed discussion on the selection of $F$ and $\lambda$ can be found in \cite{swarm_denc15_tac}. Here, $\lambda$ denotes the exponential decay rate of the error, $e(t)=x(t)-v$ and can be computed  via a line search (by noting that the above inequalities are LMIs for a given $\lambda$ and $F=\dg{(v)}^{-1}$) to achieve the largest feasible convergence rate.  

As a second approach, we can impose {\em reversibility} on the Markov matrix.   Then, if $v\!>\! \zero$, we can state a necessary and sufficient condition \cite{boyd_09} for ergodicity constraint as  follows:
\begin{equation} \label{eq:reversible_LMI}
-\lambda I  
\preceq H^{-1} M H -hh^T \preceq \lambda I  ,
\end{equation}
where  $h=(v_1^{1/2},\ldots,v_m^{1/2})$ and $H= {\rm\dg}(h)$. Note that $\lambda$, the convergence rate, can be minimized for fastest convergence within a convex problem formulation.

Finally, the third approach is to impose strong connectivity on the corresponding adjacency matrix of the Markov matrix, i.e. $\mI(M) \ \text{is strongly connected}$. This can be obtained using the following linear equations:
\begin{equation}\label{eq:M_connect}
M[i,j]\geq \epsilon \text{ if } A_a[i,j]=1,
\end{equation}
where $\epsilon>0$ is a small positive scalar (e.g., it can be set as the machine precision). This provides a sufficient condition for ergodicity of the Markov chain. Thus if $Mv=v$ and the adjacency matrix $A_a$ corresponds to a connected graph, then $v$ is unique stationary distribution and $\tgi x(t) = v \ \   \forall \, x(0) \in \Probs^n$.   Note that, though we can ensure convergence with simpler inequalities, we cannot impose  or minimize convergence rate directly in this approach.

\subsection*{Formulation of synthesis as an optimization problem:}
We can formulate the Markov matrix synthesis as a minimization problem with the desired constraints. One example cost function is $\one^T(\one - \dg(M))$ which aims to minimize overall action, i.e., $M\simeq I$:\\
\begin{equation} \label{syn_lmi2} 
\begin{split}
\min_{M} & \quad \one^T(\one - \dg(M)) \quad \text{\em such that}\\
&M \in \mathcal{M}_F \\
& (\ref{eq:min_prob}) \ \text{or} \ (\ref{eq:reversible_LMI}) \ \text{or} \ (\ref{eq:M_connect}).
\end{split}
\end{equation}

\section{Numerical Example}\label{sec:simulation}
This section presents an  illustrative numerical example for  the   density control problem for autonomous agents  with ON/OFF control modes. We consider  a swarm of mobile agents that  are distributed over the configuration space (see Figure \ref{Sim}) that is partitioned to 8 subregions, which are referred to  as {\em bins}. In this configuration, probabilistic density distribution is given as $x[i](t):= \pr \{r(t) \in R_i\}$ where $r(t)$ is the position vector of an agent at time step $t$. For this setting, safety upper bound constraint is used to limit the expected number of agents in each bin. Two sets of simulations are performed by using the ON/OFF policy synthesized by solving the LMI problem in (\ref{syn_lmi2}) with (\ref{eq:min_prob}), both with the density upper bound constraints and $m=5$ actions for the ON mode: (i) Total $N_{s1} = 3000$ simulations with same safe initial condition, i.e., same $x(0)$ with different realizations; (ii) Total $N_{s2}=3000$ simulations with randomly generated $3000$ safe initial conditions. For all cases, OFF case corresponds to ``no action", i.e., $G_{\off} = I$. For the actions in the ON mode, column stochastic $G_k$ matrices are selected such  that they have different steady-state final distributions and do not satisfy safety and transition constraints. 

Other parameters for the simulations are set as follows:\vspace{3mm}
%\begin{eqnarray*}
%&N_a=3000, \quad x(0)=[0.5 \ \ 0 \ \ 0.5 \ \ 0 \ \ 0 \ \ 0 \ \ 0 \ \ 0]^T, \quad \lambda =0.975,\\
% &\nu=[0.005 \ \ 0.02 \ \ 0.005 \ \ 0.04 \ \ 0.05 \ \ 0.34 \ \ 0.2 \ \ 0.34]^T,\\
%&d=[1 \ \ 0.15 \ \ 1 \ \ 0.12 \ \ 0.12 \ \ 1 \ \ 0.4 \ \ 1]^T 
%\end{eqnarray*}
%
\begin{itemize}\itemsep 7pt
 \item[] $A_a =$ {\footnotesize $\begin{bmatrix}1 & 1 & 1 & 0 & 0 & 0 & 0 & 0\\ 1 & 1 & 1 & 1 & 1 & 0 & 0 & 0\\ 1 & 1 & 1 & 0 & 0 & 0 & 0 & 0\\ 0 & 1 & 0 & 1 & 1 & 0 & 1 & 0\\ 0 & 1 & 0 & 1 & 1 & 0 & 1 & 0\\ 0 & 0 & 0 & 0 & 0 & 1 & 1 & 1\\ 0 & 0 & 0 & 1 & 1 & 1 & 1 & 1\\ 0 & 0 & 0 & 0 & 0 & 1 & 1 & 1 \end{bmatrix}$} 
 \item[] $N_a=3000$
 \item[] $x(0)=[0.5 \ \ 0 \ \ 0.5 \ \ 0 \ \ 0 \ \ 0 \ \ 0 \ \ 0]^T$
 \item[] $v=[0.005 \ \ 0.02 \ \ 0.005 \ \ 0.04 \ \ 0.05 \ \ 0.34 \ \ 0.2 \ \ 0.34]^T$
 \item[] $d=[1 \ \ 0.15 \ \ 1 \ \ 0.12 \ \ 0.12 \ \ 1 \ \ 0.4 \ \ 1]^T $
 \item[] $\lambda =0.975$
\end{itemize}
\noindent where $A_a$ is the adjacency matrix of the bin connections, $N_a$ is the total number of agents, $x(0)$ is the initial distribution of agents, $v$ is the desired final distribution as in equation \eqref{eq:convergence}, $d$ is a safety upper bound constraint (as in equation \eqref{eq:safety} with $L=I$ and $p=q=d$), and $\lambda$ is the convergence rate of the system.

\begin{figure}[t!]
\begin{center}
\includegraphics[width=3.5in]{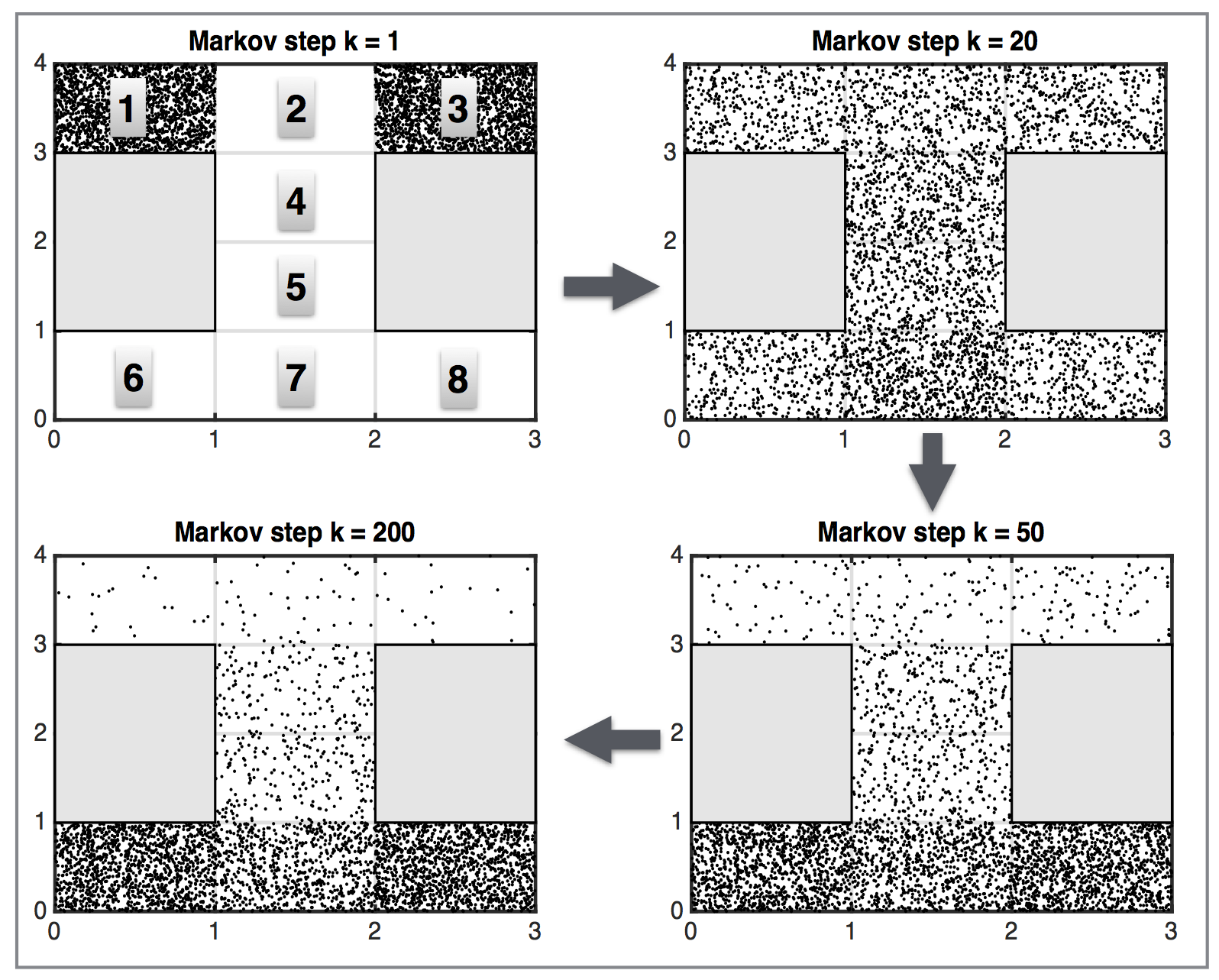}
\end{center}
\vspace{-5mm}
\caption{Snapshots of simulation: configuration space with bin numbers}
\label{Sim}
\end{figure}
\vspace{-1mm}

Since the bins at the corners behave like accumulation points in the initial and final distributions, the safety upper bound constraints are not imposed for these bins, i.e., the corresponding entries of the $d$ vector are set as $1$. With the given parameters, the optimization problem given in (\ref{syn_lmi1})  with (\ref{eq:min_prob}) is solved using YALMIP and SDPT3 
\cite{yalmip,tutuncu03}. $G_k$ matrices used in the simulations and the final distributions, $v_k$, corresponding to each $G_k$ are given in the appendix along with the resulting solution variables $Q_k$ and $\alpha_k$.
In many applications, environmental transition matrices, $G_k$'s, may satisfy transition constraints in most examples, i.e., $G_k[i,j] = 0$ when $A_a[j,i]=0$. However this example considers some  $G_k$ matrices that do not have this property, that is, we do not allow some motions even when they can be induced by the environment. For example, as given in the appendix, $G_1[4,1]>0$ even though $A_a[1,4]=0$ and hence $Q_1[4,1]=0$.  This makes the problem more challenging  since motion constraints are not automatically satisfied by the environment and they must be ensured by the policy. 
Such scenarios can arise in the balloon motion control example given in the introduction \cite{elfes2010implications,kuwata2009decomposition,wolf2010probabilistic}
 where environmental transition matrices may not satisfy the desired constraints. Though some altitudes may induce high velocities, we may not choose to ride with such fast winds, for example, not to damage the structural integrity of the balloon (there may be a maximum speed limit for structural safety purposes).

% \todo[inline]{Write out Qs and alphas, and Gs with v's}
Simulation results are presented in Figure \ref{Density}. The mean density $\bf{x}$ and $3\sigma$ confidence bounds are shown for the case with density upper bound $d$. The average density for the case without constraints is obtained by evolving the density according to equation (\ref{eq:markov}). Density goes above the desired upper bound for the bins $2,4$ and $5$ when the constraint is not imposed. By using ON/OFF control policy, we are able to ensure that the density does not go beyond the prescribed upper limit at a reasonable cost of reduced convergence rate.

For the results of the second set of simulations, point-wise maximum values of the density at each time step over all $3000$ simulations are plotted. This is a good demonstration of our claim in Lemma~\ref{thm:safetygeneral}: For all safe initial conditions, i.e., $x(0) \le d$, the density is guaranteed to satisfy safety constraints for all time, i.e., $x(t) \le d, \ t\in \mathbb{N}^+$.

The snapshots of the overall distribution taken at the beginning and at the end of a simulation from the first set are shown in Figure \ref{Sim} which also shows the bin numbers. 

\begin{center}
\begin{figure*}[ht!]
\begin{minipage}{7.2in}
\includegraphics[width=7.2in,height=4in]{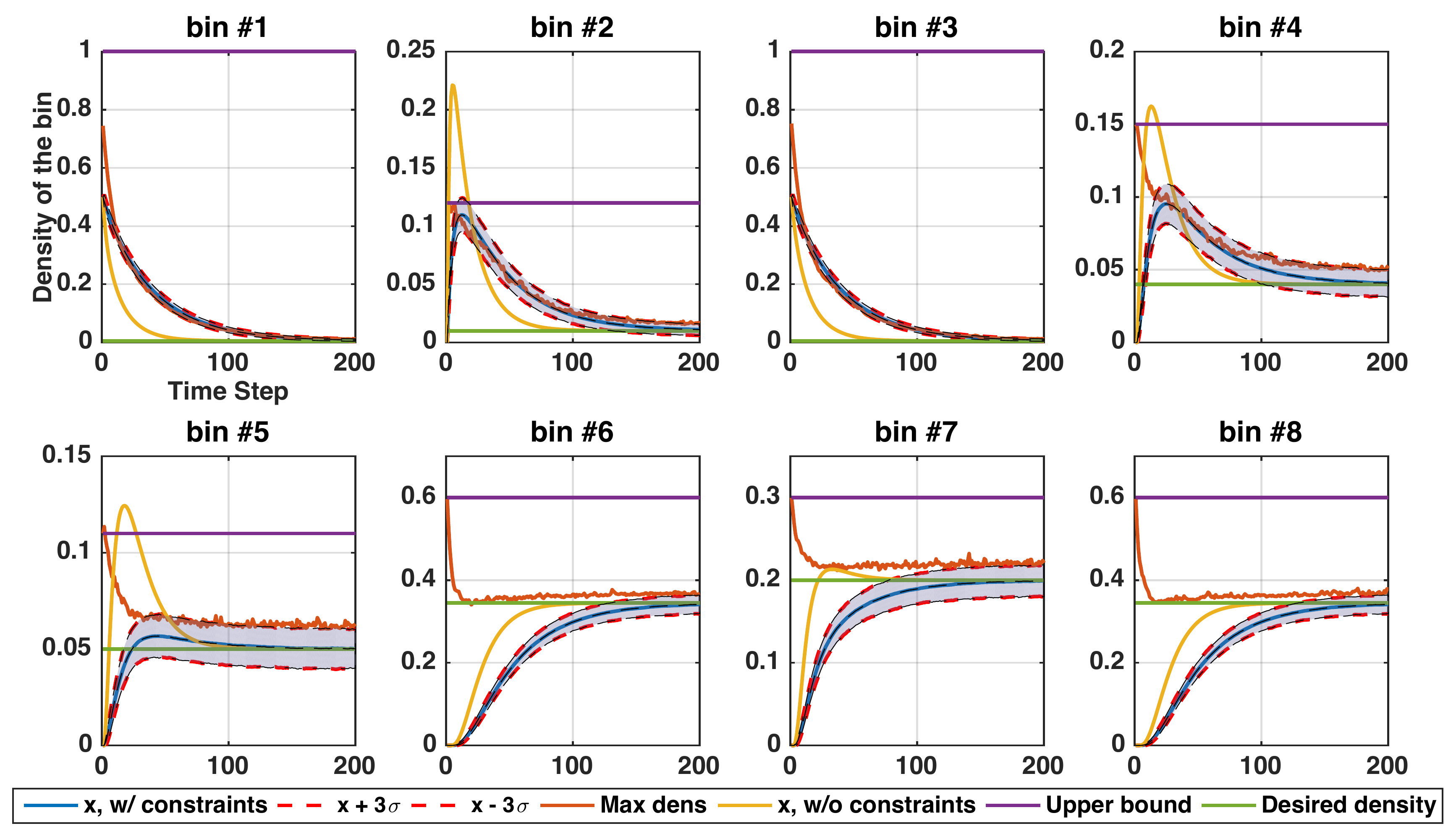}
\vspace{-5mm}
\caption{Time history of the density of each bin. With the density upper bound constraints the density is guaranteed to stay in the tube between red dashed lines with 99.7\% confidence.}
\label{Density}
\end{minipage}
\end{figure*}
\end{center}
\vspace{-5mm}

%-----------------------
%
%----------------
\section{Conclusion}\label{sec:conclusion}
In this paper, we develop a probabilistic density control policy for autonomous mobile agents with two modes: ON and OFF. When the agent is in the ON mode, it can observe the one-step outcome of a single action chosen from actions for the ON mode and can decide whether to take this action or not. If it does not take the action, it switches to the OFF mode. The density distribution of agents in the system evolves according to a Markov chain that, as shown in this paper, is a linear function of the stochastic environment and the decision policy. We formulate a convex optimization problem, that can be solved reliably via interior-point methods, to synthesize the decision policy which ensures desired safety, transition and convergence properties for the underlying Markov chain. The given constraints on the density are equivalently expressed as constraints on the Markov chain. The  resulting  density control model is illustrated with a numerical example on autonomous mobile agents. 

\bibliographystyle{IEEEtran}
 \bibliography{swarm.bib}

% %-----------------------
% \section*{Acknowledgments}
% %\vspace{-0.3cm}
% This research was supported in part by  Defense Advanced Research Projects Agency (DARPA) Grant No. D14AP00084 and National Science Foundation Grant No. CNS-1446578
\begin{appendix}
The problem parameters and  the resulting solution variables for the numerical example are: 
$$
G_1 \!=\!{\footnotesize 10^{-4}\!\! \left[
\arraycolsep=2.7pt
\begin{array}{cccccccc} 6735 & 384 & 284 & 146 & 179 & 27 & 34 & 22\\ 773 & 5252 & 290 & 339 & 337 & 24 & 42 & 22\\ 680 & 1080 & 7611 & 245 & 141 & 116 & 135 & 83\\ 505 & 1196 & 226 & 6428 & 278 & 95 & 436 & 122\\ 202 & 891 & 389 & 613 & 6425 & 79 & 421 & 88\\ 391 & 308 & 585 & 450 & 546 & 8012 & 506 & 1264\\ 349 & 540 & 157 & 1339 & 1696 & 666 & 7437 & 379\\ 365 & 349 & 458 & 440 & 398 & 981 & 989 & 8020 \end{array}\right]}
$$

$$
G_2 \!=\!{\footnotesize 10^{-4}\!\! \left[
\arraycolsep=2.7pt
\begin{array}{cccccccc} 1871 & 2359 & 1563 & 1590 & 1349 & 2650 & 2471 & 235\\ 1616 & 82 & 1651 & 2019 & 1008 & 56 & 1159 & 1820\\ 1269 & 194 & 283 & 1387 & 1117 & 924 & 2903 & 1992\\ 1314 & 2350 & 2205 & 149 & 1586 & 522 & 402 & 1211\\ 1209 & 695 & 1657 & 1766 & 1792 & 1928 & 101 & 1641\\ 824 & 614 & 1911 & 1827 & 511 & 2070 & 1478 & 1032\\ 1463 & 1806 & 155 & 778 & 1038 & 111 & 992 & 453\\ 434 & 1900 & 575 & 484 & 1599 & 1739 & 494 & 1616 \end{array}\right]}
$$
$$
G_3 \!=\!{\footnotesize 10^{-4}\!\! \left[
\arraycolsep=2.7pt
\begin{array}{cccccccc} 4579 & 389 & 324 & 291 & 360 & 53 & 69 & 46\\ 975 & 3954 & 359 & 293 & 320 & 47 & 84 & 43\\ 822 & 1304 & 5688 & 490 & 282 & 233 & 270 & 167\\ 1009 & 1171 & 451 & 5333 & 269 & 190 & 336 & 244\\ 404 & 787 & 777 & 750 & 5460 & 158 & 301 & 175\\ 783 & 617 & 1171 & 901 & 1092 & 7854 & 1012 & 697\\ 699 & 1081 & 314 & 1062 & 1422 & 878 & 7083 & 432\\ 729 & 697 & 916 & 880 & 795 & 587 & 845 & 8196 \end{array}\right]}
$$
$$
G_4 \!=\!{\footnotesize 10^{-4}\!\! \left[
\arraycolsep=2.7pt
\begin{array}{cccccccc} 5864 & 878 & 526 & 686 & 679 & 764 & 70 & 534\\ 780 & 4938 & 783 & 671 & 516 & 768 & 799 & 744\\ 571 & 906 & 4983 & 689 & 638 & 1027 & 761 & 424\\ 795 & 1231 & 693 & 5033 & 461 & 124 & 849 & 814\\ 687 & 771 & 593 & 925 & 4788 & 751 & 873 & 612\\ 744 & 560 & 836 & 665 & 598 & 4581 & 951 & 1066\\ 388 & 276 & 572 & 804 & 1530 & 807 & 4885 & 738\\ 171 & 440 & 1014 & 527 & 790 & 1178 & 812 & 5068 \end{array}\right]}
$$

$$
G_5 \!=\!{\footnotesize 10^{-4}\!\! \left[
\arraycolsep=2.7pt
\begin{array}{cccccccc} 8890 & 377 & 244 & 0 & 0 & 0 & 0 & 0\\ 572 & 6550 & 221 & 385 & 354 & 0 & 0 & 0\\ 538 & 856 & 9535 & 0 & 0 & 0 & 0 & 0\\ 0 & 1221 & 0 & 7522 & 287 & 0 & 535 & 0\\ 0 & 996 & 0 & 477 & 7390 & 0 & 540 & 0\\ 0 & 0 & 0 & 0 & 0 & 8170 & 0 & 1830\\ 0 & 0 & 0 & 1616 & 1969 & 455 & 7792 & 326\\ 0 & 0 & 0 & 0 & 0 & 1375 & 1133 & 7844 \end{array}\right]}
$$

$$v_1 = {\footnotesize \begin{bmatrix}   0.0200\\
    0.0200\\
    0.0600\\
    0.0600\\
    0.0600\\
    0.2900\\
    0.2000\\
    0.2900\\
    \end{bmatrix}},
 v_2 = {\footnotesize \begin{bmatrix}      0.1756\\
    0.1191\\
    0.1197\\
    0.1246\\
    0.1391\\
    0.1246\\
    0.0883\\
    0.1090 \end{bmatrix}},
 v_3 = {\footnotesize \begin{bmatrix}   0.0200\\
    0.0200\\
    0.0600\\
    0.0600\\
    0.0600\\
    0.2900\\
    0.2000\\
    0.2900\\
    \end{bmatrix}},
    $$
    $$
  v_4 ={\footnotesize \begin{bmatrix} 0.1250 \\
    0.1250\\
    0.1250\\
    0.1250\\
    0.1250\\
    0.1250\\
    0.1250\\
    0.1250 \end{bmatrix}},
 v_5 = {\footnotesize \begin{bmatrix}   0.0200\\
    0.0200\\
    0.0600\\
    0.0600\\
    0.0600\\
    0.2900\\
    0.2000\\
    0.2900\\
    \end{bmatrix}}.
$$

$$
\alpha_1 = {\footnotesize \begin{bmatrix}
    0.0940\\
    0.1139\\
    0.0568\\
    0.2006\\
    0.1975\\
    0.1656\\
    0.1573\\
    0.1477\end{bmatrix}},
\alpha_2 = {\footnotesize \begin{bmatrix}
    0.5594\\
    0.5165\\
    0.6630\\
    0.1332\\
    0.1125\\
    0.1229\\
    0.2167\\
    0.1553 \end{bmatrix}},
\alpha_3 = {\footnotesize\begin{bmatrix}
    0.1201\\
    0.1036\\
    0.0616\\
    0.1457\\
    0.1492\\
    0.1846\\
    0.2036\\
    0.1488 \end{bmatrix}},
$$
$$
\alpha_4 = {\footnotesize\begin{bmatrix}
    0.0941\\
    0.0987\\
    0.0969\\
    0.1135\\
    0.1676\\
    0.1845\\
    0.2150\\
    0.1955 \end{bmatrix}},
\alpha_5 = {\footnotesize \begin{bmatrix}
    0.0976\\
    0.1464\\
    0.0679\\
    0.3711\\
    0.3404\\
    0.1896\\
    0.1537\\
    0.1903\end{bmatrix}}
$$

$$
Q_1\!\! = \!\!{\scriptsize 10^{-4}\left[
\arraycolsep=2pt
\begin{array}{cccccccc} 6184 & 6205 & 7025 & 0 & 0 & 0 & 0 & 0\\ 10000 & 5983 & 10000 & 4886 & 7151 & 0 & 0 & 0\\ 7165 & 9178 & 7084 & 0 & 0 & 0 & 0 & 0\\ 0 & 10000 & 0 & 5888 & 4599 & 0 & 6142 & 0\\ 0 & 9932 & 0 & 8553 & 5895 & 0 & 7780 & 0\\ 0 & 0 & 0 & 0 & 0 & 6645 & 9431 & 8970\\ 0 & 0 & 0 & 10000 & 10000 & 10000 & 6141 & 10000\\ 0 & 0 & 0 & 0 & 0 & 8186 & 10000 & 7044 \end{array}\right]}
$$

$$
Q_2\!\! = \!\!{\scriptsize 10^{-4}\left[
\arraycolsep=2pt
\begin{array}{cccccccc} 5788 & 4625 & 5634 & 0 & 0 & 0 & 0 & 0\\ 10000 & 5780 & 10000 & 2621 & 8247 & 0 & 0 & 0\\ 9130 & 8424 & 5778 & 0 & 0 & 0 & 0 & 0\\ 0 & 10000 & 0 & 6133 & 2381 & 0 & 5928 & 0\\ 0 & 9913 & 0 & 9595 & 6170 & 0 & 6557 & 0\\ 0 & 0 & 0 & 0 & 0 & 7643 & 10000 & 8495\\ 0 & 0 & 0 & 10000 & 10000 & 8729 & 5970 & 10000\\ 0 & 0 & 0 & 0 & 0 & 10000 & 9441 & 6867 \end{array}\right]}
$$

$$
Q_3 \!\! = \!\! {\scriptsize
10^{-4}\left[
\arraycolsep=2pt
\begin{array}{cccccccc} 6037 & 6198 & 6769 & 0 & 0 & 0 & 0 & 0\\ 10000 & 6009 & 10000 & 5376 & 7016 & 0 & 0 & 0\\ 7375 & 9295 & 6838 & 0 & 0 & 0 & 0 & 0\\ 0 & 10000 & 0 & 6006 & 4978 & 0 & 6045 & 0\\ 0 & 9875 & 0 & 8594 & 5989 & 0 & 7564 & 0\\ 0 & 0 & 0 & 0 & 0 & 6406 & 10000 & 8110\\ 0 & 0 & 0 & 10000 & 10000 & 10000 & 6069 & 10000\\ 0 & 0 & 0 & 0 & 0 & 7443 & 9937 & 6929 \end{array}\right]}
$$

$$
Q_4\!\! = \!\!{\scriptsize 10^{-4}\left[
\arraycolsep=2pt
\begin{array}{cccccccc} 6181 & 6542 & 6026 & 0 & 0 & 0 & 0 & 0\\ 10000 & 6010 & 10000 & 4975 & 7535 & 0 & 0 & 0\\ 7022 & 8884 & 6154 & 0 & 0 & 0 & 0 & 0\\ 0 & 10000 & 0 & 6170 & 4073 & 0 & 5961 & 0\\ 0 & 9840 & 0 & 8807 & 5947 & 0 & 8956 & 0\\ 0 & 0 & 0 & 0 & 0 & 6455 & 10000 & 8130\\ 0 & 0 & 0 & 10000 & 10000 & 10000 & 6073 & 10000\\ 0 & 0 & 0 & 0 & 0 & 8266 & 9948 & 6375 \end{array}\right]}
$$

$$
Q_5 \!\! = \!\! {\scriptsize 10^{-4}\left[
\arraycolsep=2pt
\begin{array}{cccccccc} 
6326 & 6227 & 7107  & 0    & 0     & 0    & 0    & 0   \\ 10000 & 5957 & 10000 & 3764 & 7591 & 0    & 0    & 0   \\ 7150 & 9133 & 7195 & 0    & 0      & 0    & 0    & 0   \\ 
0    & 10000 & 0   & 5857 & 3727   & 0    & 6123 & 0   \\ 
0    & 9974 & 0    & 8794 & 5868   & 0    & 8017 & 0   \\ 
0    & 0    & 0    & 0    & 0      & 6913 & 6109 & 9742\\ 
0    & 0    & 0    & 10000 & 10000 & 10000 & 6134 & 10000\\ 
0    & 0    & 0    & 0     & 0     & 9090 & 10000 & 7037 \end{array}\right]}
$$

\end{appendix}

%---
%------------------------------------------------------------------------------------------------------------------
%55555555555555555555555555555555555555555555555555555555555555555555555555555
%\bibliographystyle{IEEEtran}

\begin{IEEEbiography}
[{\includegraphics[width=1in,height=1.25in,clip,keepaspectratio]{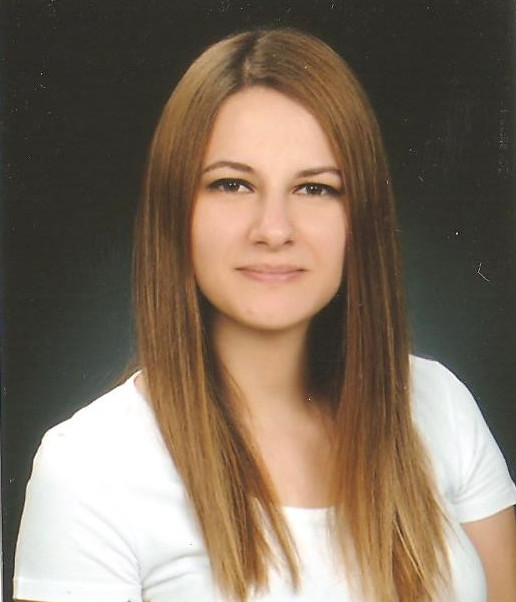}}]{Nazl\i \ Demirer}
is a Graduate Research Assistant with the Autonomous Control Laboratory in the William E. Boeing Department of Aeronautics and Astronautics at the University of Washington, USA.  She received the B.S. degree in Aerospace Engineering from the Middle East Technical University, Ankara, Turkey in 2012 and M.S degree from The University of Texas at Austin, TX in 2014. She is currently pursuing Ph.D. degree in Aeronautics and Astronautics from the University of Washington. Her current research interests include autonomous multi-agent systems and Markov chains.
\end{IEEEbiography} 
\begin{IEEEbiography}
[{\includegraphics[width=1in,height=1.25in,clip,keepaspectratio]{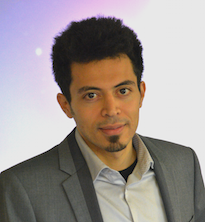}}]{Mahmoud El Chamie}
is a Research Associate with the Autonomous Control Laboratory in the William E. Boeing Department of Aeronautics and Astronautics at the University of Washington, USA.  He  received a M.S. (UBINET program) and a Ph.D. in computer science from the University of Nice Sophia Antipolis, France, in 2011 and 2014 respectively. His Ph.D. was held at Inria Sophia Antipolis with the Maestro team. His current research interests include optimization and control of autonomous multi-agent systems.
\end{IEEEbiography} 
 \begin{IEEEbiography} 
[{\includegraphics[width=1in,height=1.25in,clip,keepaspectratio]{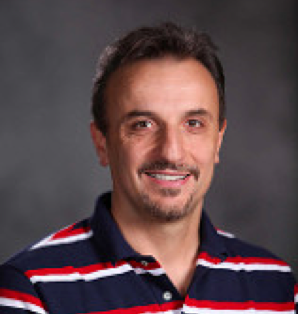}}]{Beh\c{c}et A\c{c}{\i}kme\c{s}e}
is an Associate Professor in the Department of Aeronautics and Astronautics at the University of Washington, Seattle. He received his Ph.D. from Purdue University and was a Visiting Assistant Professor at Purdue University before joining NASA Jet Propulsion Laboratory (JPL) in 2003. He was a senior technologist at JPL and a lecturer at Caltech, where he developed control algorithms for planetary landing, formation flying spacecraft, and asteroid and comet sample return missions. He is the developer of the “flyaway” control algorithms used in Mars Science Laboratory, which landed on Mars in August, 2012. He has joined the faculty of University of Texas at Austin in 2012, where he stayed until 2015.
\end{IEEEbiography}
%%%%%%%%%%%%%%%%%%%%%%%%%%%%%%%%%%%%%%%%%%%%%%%%%%%%%%%%%%%%%%%%%%%%%%%%%%%%%%%%
\end{document}